\newtheorem{lemma}{Lemma}
\newtheorem{theorem}{Theorem}
\newtheorem{corollary}{Corollary}
\newtheorem*{conjecture}{Schinzel's Hypothesis H}
\begin{document}

\title{Common divisors of totients of polynomial sequences} 
\author{J. Br{\"u}dern} 
\address{Mathematisches Institut\\ Universit\"at G\"ottingen\\ Bunsenstrasse 3--5, D-37073 G\"ottingen}
\email{joerg.bruedern@mathematik.uni-goettingen.de} 
 \author{K. Soundararajan} 
\address{Department of Mathematics \\ Stanford University \\
450 Serra Mall, Bldg. 380\\ Stanford, CA 94305-2125}
\email{ksound@stanford.edu}
  
 \maketitle
  
\section{Introduction}  

\noindent Let $f\in{\mathbb Z}[x]$ be a primitive  polynomial of degree $k$ (that is, the coefficients of $f$ have gcd equal to $1$).   We are interested in 
\begin{equation} 
\label{1} 
{\mathcal G}(f) = \text{gcd} \{ \phi(f(n)): \  \ n\in {\Bbb N}\}. 
\end{equation}  
In particular we are motivated by the question of Venkataramana \cite{V, VMo} that ${\mathcal G}(f)$ is bounded 
by a number ${\mathcal G}_k$ depending only on the degree $k$ of the polynomial $f$. He handled  the 
case of linear polynomials and found that ${\mathcal G}(f)\mid 4$ holds for all $f(n)=an+b$ with $(a,b)=1$. The polynomials $n$, $2n+1$ and $16n+5$ show that  ${\mathcal G}(f)$ takes all three admissible values 1, 2 and 4. Results of this type have been applied to the congruence
subgroup problem, and as Venkataramana points out, in this context Serre
\cite{S} had obtained {\em inter alia} that ${\mathcal G}(f)$ is a divisor of 8 for all linear $f$.

In this note we are concerned with ${\mathcal G}(f)$ for polynomials of higher degree.  In brief, we are able to establish the existence of an admissible value for ${\mathcal G}_2$, and we also give a bound for  ${\mathcal G}(f)$ when the polynomial $f$ splits completely 
into linear factors. 
  Assuming the Schinzel conjectures on prime values taken by polynomials we are able to describe the factorisation of  ${\mathcal G}(f)$ quite precisely, for all polynomials, and thereby establish the existence of ${\mathcal G}_k$. Examples will demonstrate that the conditional results are optimal in some cases.  
We now describe our results more precisely.
Let us recall first Schinzel's hypothesis.

\begin{conjecture} Let $F_1$, $\ldots$, $F_r$ be irreducible polynomials 
with integer coefficients, and positive leading coefficients.  Suppose that the product $F_1 \cdots F_r$ is 
not divisible by any fixed prime.  Then there are infinitely many natural numbers $n$ such that $F_j(n)$ is 
prime for each $1\le j\le r$.  
\end{conjecture}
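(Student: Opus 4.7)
The statement is the celebrated Schinzel Hypothesis~H, which is open in essentially every case beyond what Dirichlet established in 1837. Consequently, no genuine proof plan is available; what follows is a description of the only known routes toward partial results and of the obstruction that blocks a full proof.

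The natural quantitative form of the conjecture asks for
\[
\sum_{n \le N} \Lambda(F_1(n)) \cdots \Lambda(F_r(n)) \sim \mathfrak{S}(F_1,\ldots,F_r)\, N,
\]
with singular series $\mathfrak{S} = \prod_p (1 - \rho(p)/p)(1 - 1/p)^{-r}$, where $\rho(p)$ is the number of residues $n \pmod p$ with $p \mid F_1(n)\cdots F_r(n)$. The hypothesis that no fixed prime divides the product is exactly $\mathfrak{S} > 0$. For $r=1$ and $\deg F_1 = 1$ one recovers Dirichlet's theorem via non-vanishing of $L(s,\chi)$ at $s=1$. For $r \ge 2$ with linear $F_j$ one has the twin-prime and prime $k$-tuples conjectures; already for a single $F_1$ of degree $k \ge 2$ one runs into the open problem of infinitely many primes of the form $n^2+1$.

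A first attempt would apply the Hardy--Littlewood circle method. The major-arc contribution matches the predicted main term, but the minor-arc contribution cannot be controlled. This reflects Selberg's parity obstruction: a pure sieve in one variable cannot distinguish integers with an even number of prime factors from those with an odd number, and hence cannot detect primes in a polynomial sequence $F(n)$ with $\deg F \ge 2$. The only unconditional successes --- Dirichlet's theorem, Iwaniec on $n^2+m^2+1$, Friedlander--Iwaniec on $x^2+y^4$, Heath-Brown on $x^3+2y^3$, the Green--Tao theorem on linear configurations in the primes, and the Maynard--Zhang bounded-gaps circle of ideas --- either reduce to one-dimensional $L$-function arguments or exploit a second variable to access Type~II bilinear information that breaks parity.

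The hard part, and indeed the only remaining part, is precisely this parity barrier: a routine computation confirms that the singular series predicts the conjectured asymptotic, but the absence of any minor-arc or bilinear estimate in the single-variable regime makes the unconditional statement inaccessible to current methods. Hypothesis~H is therefore imported in the present paper as an unproved assumption and used to bootstrap unconditional-style statements about $\mathcal{G}(f)$.
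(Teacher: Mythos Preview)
Your assessment is correct: the paper does not offer a proof of this statement. It is introduced under the heading ``Schinzel's Hypothesis H'' precisely as an unproved conjecture, and is then invoked as a standing assumption in Theorem~1, Lemma~3, Corollary~1, and Lemma~5. Your summary of the quantitative form, the parity obstruction, and the known partial results accurately explains why no genuine proof is available and matches the paper's own treatment of Hypothesis~H as an imported assumption rather than a theorem.
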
  

\begin{theorem}  Assume Schinzel's hypothesis.  Let $f=f_1^{a_1} \cdots f_s^{a_s}$ be a primitive 
polynomial with integer coefficients, with the $f_j$ being distinct irreducibles of degree $k_j$.   Let $r_j$ be the maximal integer 
such that $K_{f_j}$ (a field obtained by adjoining a root of $f_j$ to ${\Bbb Q}$) contains the $r_j$-th roots of 
unity.  Then $\phi(r_j)|k_j$, and ${\mathcal G}(f)$ divides $\phi(k!) r_1^2 \cdots r_s^2$.  
\end{theorem}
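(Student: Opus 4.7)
To begin, the divisibility $\phi(r_j)\mid k_j$ is immediate from the tower $\mathbb{Q}\subseteq \mathbb{Q}(\zeta_{r_j})\subseteq K_{f_j}$: the degrees $[\mathbb{Q}(\zeta_{r_j}):\mathbb{Q}]=\phi(r_j)$ and $[K_{f_j}:\mathbb{Q}]=k_j$ must satisfy $\phi(r_j)\mid k_j$ by multiplicativity. The substantive content is the bound $\mathcal{G}(f)\mid \phi(k!)\prod_j r_j^2$, which I now outline.

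The plan is to apply Schinzel's hypothesis to produce $n$ at which $f(n)$ has a controllable factorisation, and then to read off divisibility constraints on $\mathcal{G}(f)$ from $\phi(f(n))$. The recurring input is that whenever $f_j(n)=p$ is prime, $p$ has a prime of residue degree one in $K_{f_j}$ and hence splits completely in the cyclotomic subfield $\mathbb{Q}(\zeta_{r_j})\subseteq K_{f_j}$, forcing $p\equiv 1\pmod{r_j}$. After a standard preliminary restriction to a residue class $n\equiv n^*\pmod{m}$ eliminating any fixed prime divisor of $\prod_j f_j$, Schinzel yields infinitely many $n$ with each $f_j(n)=p_j$ prime and $\phi(f(n))=\prod_j p_j^{a_j-1}(p_j-1)$. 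Since $\mathcal{G}(f)$ is a priori a bounded integer and the $p_j$ may be taken larger than any of its prime divisors, this yields the preliminary divisibility $\mathcal{G}(f)\mid \prod_j(p_j-1)$.

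To refine this into the claimed bound I would work one prime at a time. Fixing a rational prime $q$ and a large $M$, Schinzel applied to the shifted polynomials $F_j(t)=f_j(q^M t+n_0)$ --- still irreducible with positive leading coefficient, and for all but finitely many residues $n_0\pmod{q^M}$ devoid of fixed prime divisors --- provides $n\equiv n_0\pmod{q^M}$ with each $f_j(n)$ prime. Hence
\[
v_q(\mathcal{G}(f)) \le \min_{n_0}\sum_j v_q(f_j(n_0)-1),
\]
and the task reduces to bounding this minimum by $v_q(\phi(k!))+2\sum_j v_q(r_j)$. One term $v_q(r_j)$ is automatic from $r_j\mid p-1$; the additional $v_q(r_j)$ --- giving the square --- should emerge from a local analysis of $f_j(x)-1$ over $\mathbb{Z}_q$ guided by the explicit embedding $\zeta_{r_j}\in K_{f_j}$ and the decomposition of $q$ in $K_{f_j}$. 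The $v_q(\phi(k!))$ contribution is meant to absorb unavoidable congruence obstructions at small primes $q\le k$ arising from the Galois group of $f$ inside $S_k$.

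The chief obstacle is this final local step --- proving the sharp exponent $2 v_q(r_j)$ uniformly in $q$. It will likely require a Hensel or Newton-polygon analysis of $f_j(x)-1$ over $\mathbb{Z}_q$, combined with careful tracking of ramification and decomposition of $q$ through $\mathbb{Q}(\zeta_{r_j})\subseteq K_{f_j}$. Should direct prime-value Schinzel input not already suffice --- for instance $f(n)=8n+9$, where the $\gcd$ of $p-1$ across primes $p=8n+9$ is $8$ while the theorem asserts $\mathcal{G}(f)\mid 4$ --- the argument should be supplemented by applying Schinzel to auxiliary polynomials realising $f$-values of prescribed composite shape such as $p^2$, extracting further upper-bound constraints on $\mathcal{G}(f)$ from identities like $\phi(p^2)=p(p-1)$.
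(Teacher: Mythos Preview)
Your opening reduction and the prime-value bound $v_q(\mathcal G(f))\le \min_{n_0}\sum_j v_q(f_j(n_0)-1)=\sum_j\beta_j$ are correct and match the paper's Lemma~3. The gap is precisely where you say it is: the passage from $\sum_j\beta_j$ to $\sum_j 2\alpha_j$ when some $\beta_j>2\alpha_j$. Your proposed route---a $q$-adic Newton-polygon analysis of $f_j(x)-1$---cannot succeed, because in such cases the minimum of $v_q(f_j(n)-1)$ over all $n$ genuinely equals $\beta_j$, not $2\alpha_j$; your own example $f(n)=8n+9$ (where $\alpha=1$, $\beta=3$) already witnesses this. No amount of local analysis of $f_j-1$ will lower $\beta_j$. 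Your fallback of realising $f_j(n)=p^2$ happens to work for linear $f_j$ but fails in higher degree: $f_j(n)=p^2$ with $\deg f_j\ge 2$ is a curve of positive genus, and Schinzel says nothing about it.

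The missing idea, which the paper packages as Lemma~2 (a Chebotarev argument) feeding into Lemmas~3 and~5, is this. The maximality of $\alpha_j$ means $K_{f_j}$ does \emph{not} contain the $\ell^{\alpha_j+1}$-th roots of unity, so infinitely many primes $q$ dividing values of $f_j$ satisfy $q\not\equiv 1\pmod{\ell^{\alpha_j+1}}$. Fix such a large $q$ and a residue class $a\pmod{q}$ with $q\Vert f_j(a)$; then $f_j(a+qt)/q$ is an irreducible integer polynomial with no fixed prime divisor, and Schinzel produces $n$ with $f_j(n)=q\cdot p$, $p$ prime. Since $\beta_j\ge \alpha_j+1$ forces $f_j(n)\equiv 1\pmod{\ell^{\alpha_j+1}}$ while $q\not\equiv 1$, also $p\not\equiv 1\pmod{\ell^{\alpha_j+1}}$, and hence $v_\ell\big((q-1)(p-1)\big)=2\alpha_j$. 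Doing this simultaneously for all $j$ (with distinct auxiliary primes $q_j$) yields $v_\ell(\mathcal G(f))\le 2\sum_j\alpha_j$. The $\phi(k!)$ factor is handled separately at the outset by Lemma~1, restricting to a progression on which $f(n)/d$ is coprime to all primes $\le k$; it is not, as you suggest, a residual contribution absorbed at the end.
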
  

In the case of a linear polynomial $f$ the proof of Theorem 1 will call upon Schinzel's hypothesis only for linear polynomials, whence that case depends on Dirichlet's theorem, 
and we recover Venkataramana's result unconditionally.   

{\bf Example 1.}  Suppose $f(n) =\prod_{j=1}^{k} (a_j n+b_j)$ is the product of $k$ primitive linear polynomials.  Here Theorem 1 gives ${\mathcal G}(f) \mid 4^k \phi(k!) $.   When 
$k=2$, the polynomial $f(n)=(16n+5)(16n+13)$ has  ${\mathcal G}(f)= 16$, matching the bound of Theorem 1.   More generally, if we consider $f(n) = (n+1) \cdots (n+k)$, then $k!$ divides $f(n)$ for all $n$ and so $\phi(k!)$ divides ${\mathcal G}(f)$.   So the result in Theorem 1 is tight except perhaps for the power of $2$ dividing ${\mathcal G}(f)$.

 Since quadratic fields have only $2$, $4$, or $6$ roots of unity, if $f$ is a primitive irreducible polynomial of degree $2$ then by Theorem 1 the possible 
values of ${\mathcal G}(f)$ must be divisors of $36$ or $16$ (assuming Schinzel's hypothesis).   
We now give examples to show that this cannot be sharpened.  

{\bf Example 2.}   Consider the polynomial $f(n)=16n^2+1$, which takes values $\equiv 1 \bmod 16$. 
The prime divisors of $f(n)$ are congruent to $1 \bmod 4$. Hence, 
if $f(n)$ has at least two distinct prime factors $p_1,p_2$, say, then $16|(p_1-1)(p_2-1)$, and therefore, 
$16\mid \phi(f(n))$.
It remains to consider the case where $f(n)$ is a power of a prime $p$.  Since $f(n)=(4n)^2 +1$ can never be a 
perfect square for $n\ge 1$, we may restrict attention to $f(n)= p^{\ell}$ with $\ell$ odd.  But then $p$ must be $1 \bmod 16$, and once again $16$ 
divides $\phi(f(n))$.    This proves that ${\mathcal G}(f) =16$, with the convention that the natural numbers start at $1$.  
If the natural numbers start at $0$, simply consider $f(n+1)$.   More generally, by shifting a polynomial by a large integer, we may 
discard any finite set of undesired values in understanding ${\mathcal G}$.  The reader may wish to construct irreducible quadratic polynomials where  ${\mathcal G}(f)$ is a given proper divisor of $16$.   


{\bf Example 3.}  Start with $f_0(n) = n^2+n+1$, and consider $f(n)= f_0(72n)$.  The values of $f$ are all $\equiv 1 \bmod 72$, and any prime factor of $f(n)$ 
must be $\equiv 1 \bmod 6$.  Thus if $f(n)$ is divisible by two distinct primes then $\phi(f(n))$ will be a multiple of $36$.   If $f(n)$ is prime, then $\phi(f(n))$ 
will be a multiple of $72$.   It remains to consider the case $f(n)= p^{\ell}$ for $\ell \ge 2$.  If $3\nmid \ell$ and $4 \nmid \ell$ then $p^\ell \equiv 1 \bmod 72$ implies that $p\equiv 1 \bmod 36$, and once again $36$ divides $\phi(f(n))$.   The last remaining possibilities entail that $f(n)$ is either a cube or a fourth power.  
Since these correspond to integer points on two curves of positive genus, there are only finitely many such $n$ (which we could certainly determine in this example).  By translating the polynomial $f$ if necessary, we can avoid these finitely many examples, and arrive at a cubic polynomial $\widetilde{f}$ 
with $36 | {\mathcal G}(\widetilde{f})$.  Similar examples can be constructed starting with other cyclotomic polynomials; for instance starting with $n^4+n^3+n^2+n+1$ we can find a quartic polynomial $f$ with $25 | {\mathcal G}(f)$.

In the above examples, we were led to consider when a polynomial with integer coefficients and degree at least $2$ takes pure power 
values.  We note, in passing, the work of Schinzel and Tijdeman \cite{ST} which ensures that if the polynomial has at least three simple zeros 
then there are only finitely many such pure power values.

\medskip

Suppose $f$ splits completely into linear factors.  Then, as noted above,  our conditional Theorem 1 tells us that ${\mathcal G}(f)$ 
is a divisor of $2^{2k} \phi(k!)$.   In this situation, we can give an unconditional bound for the possible values of ${\mathcal G}(f)$. 

\begin{theorem}  Suppose $f$ is a primitive polynomial of degree $k$ splitting completely into linear factors.  Then ${\mathcal G}(f)$ 
is not divisible by any prime larger than $2k+1$.  Moreover, for every prime $\ell \le 2k+1$ there exists a constant $C(k,\ell)$ such that the power of $\ell$ dividing ${\mathcal G}(f)$ is at most $C(k,\ell)$.  
\end{theorem}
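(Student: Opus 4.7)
Write $f = \pm \prod_{j=1}^k L_j$ with $L_j(n) = a_j n + b_j$ and $\gcd(a_j, b_j) = 1$ (forced by primitivity). The identity
\[
v_\ell(\phi(m)) \;=\; [\ell \mid m]\, (v_\ell(m) - 1) + \sum_{p \mid m} v_\ell(p - 1)
\]
reduces the problem to producing $n$ where the right-hand side is small.

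For the first assertion, fix $\ell > 2k + 1$ prime and view finding $n$ with $\ell \nmid \phi(f(n))$ as a sifting problem. I would apply a lower-bound sieve (the beta-sieve of Rosser--Iwaniec) to the sequence $A = \{f(n) : n \le x\}$ with sifting set $P = \{p : p \equiv 1 \pmod \ell\}$. At a generic prime $p$ the polynomial $f$ has $k$ roots modulo $p$, so the sifting density is $k/p$; combined with the Dirichlet density $1/(\ell - 1)$ of $P$ among primes, the sifting dimension is
\[
\kappa \;=\; \frac{k}{\ell - 1}.
\]
The hypothesis $\ell > 2k + 1$ is precisely $\kappa < 1/2$, placing the problem in the regime where the beta-sieve delivers a positive main term. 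After choosing the sieving threshold and balancing the sieve main term against the contribution of primes in $P$ exceeding the threshold (but still dividing some $L_j(n)$), I would extract an $n$ for which no prime factor of $f(n)$ is $\equiv 1 \pmod \ell$. The local requirement $\ell \nmid L_j(n)$ for every $j$ (which enforces $\ell^2 \nmid f(n)$) is feasible modulo $\ell$ because $\ell > k$ leaves at least one admissible residue class, and can be imposed compatibly with the sieve. Any such $n$ satisfies $\ell \nmid \phi(f(n))$, so $\ell \nmid \mathcal{G}(f)$.

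For the second assertion, fix a prime $\ell \le 2k + 1$ and choose $R = R(k, \ell)$ so that $\phi(\ell^R) > 2k$. The same beta-sieve with $P_R = \{p : p \equiv 1 \pmod{\ell^R}\}$ (sifting dimension $k/\phi(\ell^R) < 1/2$) produces $n$ for which $v_\ell(p - 1) < R$ for every prime $p \mid f(n)$. Combined with a weighted (Richert-type) sieve giving $\omega(f(n)) \le D(k)$ and a congruence on $n$ modulo a suitable power of $\ell$ bounding $v_\ell(f(n)) \le V(k, \ell)$, the identity above yields
\[
v_\ell(\phi(f(n))) \;\le\; V(k, \ell) + R \cdot D(k) \;=:\; C(k, \ell),
\]
a bound independent of $f$, which is what is asserted.

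The principal obstacle lies in the sieve-theoretic execution: in part (i), the beta-sieve lower bound must be sharp enough, with the sieving level optimised, to dominate the contribution of large primes in $P$; in part (ii), the ``no prime in $P_R$'' sieve must be combined with the weighted almost-prime sieve (and the local congruence at $\ell$) in a way that keeps the intersection non-empty. The threshold $\ell > 2k + 1$ matches the $\kappa = 1/2$ boundary of the lower-bound sieve, so the method is essentially tight at this point.
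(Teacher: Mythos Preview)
Your approach to the first assertion matches the paper's: both recognise that sifting the primes $p\equiv 1\pmod\ell$ from the values of the linear factors is a sieve of dimension $s/(\ell-1)<1/2$ (with $s\le k$ the number of \emph{distinct} linear factors), and in this regime the lower-bound sieve succeeds directly. In fact your talk of ``balancing against primes exceeding the threshold'' is unnecessary here: the sequence of integers $n\le x$ has level of distribution essentially $x$, each linear factor has size $O(x)$, and in dimension below $1/2$ the sieve (Friedlander--Iwaniec, Theorem~11.21) sifts all the relevant primes in one pass, with no residual large-prime range to handle separately.

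For the second assertion your ingredients are correct, but the ``combination'' you flag as the principal obstacle is exactly the heart of the argument, and you have not supplied it. Running two independent sieves---a dimension-$<\tfrac12$ sieve against $P_R$, and a Richert weighted sieve for almost-primes---and then intersecting the resulting sets is not automatic: both sets have density tending to zero in $x$. The paper does not attempt this. Instead it packages both requirements into a single subtraction argument: sieve \emph{all} primes up to $z$ from the squarefree kernel $g(n)$ via the fundamental lemma in dimension $s$, obtaining a positive main term of order $x/(\log z)^s$; taking $x=z^{30s}$ already forces $\omega(g(n))\le 30s^2$, so no Richert weights are needed. From this main term one subtracts the count of those $n$ for which some prime $p\equiv 1\pmod{\ell^A}$ with $p>z$ still divides $g(n)$. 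For $z<p\le x/z^{9s}$ an upper-bound sieve bounds the contribution; for larger $p$ one invokes the \emph{switching principle}, writing $c_jn+d_j=rp$ and summing instead over the now-small complementary variable $r$. Both subtracted pieces carry a factor $\ell^{-A}$, so choosing $A$ large (depending only on $k$) makes them smaller than half the main term. This yields $n$ with $\omega(g(n))\le 30s^2$ and no prime factor $\equiv 1\pmod{\ell^A}$ simultaneously, from which the bound on $v_\ell(\phi(f(n)))$ follows. The switching device for the largest primes is the concrete step your proposal is missing.
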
 

We are also able to show unconditionally that  ${\mathcal G}_2$ is finite. In view of the preceding theorem, it is enough to 
consider primitive irreducible quadratic polynomials.

\begin{theorem}  There is a number $G$ with the property that for 
all primitive and irreducible quadratic polynomials $f$ with positive leading coefficient one has ${\mathcal G}(f)
\le G$.
\end{theorem}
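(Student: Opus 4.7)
The plan is to bound $v_\ell(\mathcal{G}(f))$ uniformly in the primitive irreducible quadratic $f$ with positive leading coefficient, for every prime $\ell$, and to show that $v_\ell(\mathcal{G}(f))=0$ for all $\ell$ above some absolute threshold $\ell_0$; the theorem then follows from $G=\prod_{\ell\le\ell_0}\ell^{C_\ell}$. The enabling combinatorial fact is the equivalence
\[
\ell\mid\phi(m)\ \iff\ \ell^2\mid m\ \text{or}\ m\ \text{has some prime divisor}\ \equiv 1\bmod\ell.
\]
Thus the whole problem reduces to producing, for each relevant $\ell$, a single natural number $n$ with $\ell^2\nmid f(n)$ and no prime factor of $f(n)$ in the residue class $1\bmod\ell$. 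Since $f$ is quadratic, the number of $n\le N$ with $\ell^2\mid f(n)$ is at most $2N/\ell^2+O(1)$, so the content of the argument is in controlling the prime divisors of $f(n)$ in arithmetic progressions.

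Such an $n$ I would construct by a sieve argument. The starting point is Iwaniec's theorem on almost-primes represented by irreducible quadratics, giving $\#\{n\le N:\Omega(f(n))\le 2\}\gg N/\log N$. To it I would superimpose the additional sifting condition that no prime factor of $f(n)$ be $\equiv 1\bmod\ell$. By Chebotarev applied to $K_f(\zeta_\ell)$, where $K_f=\mathbb{Q}(\sqrt{\mathrm{disc}(f)})$, the primes that both split in $K_f$ and lie in $1\bmod\ell$ have density $\tfrac{1}{2(\ell-1)}$, so sifting out the small ones costs a Mertens factor of order $(\log N)^{-1/(\ell-1)}$, while the at most two large prime factors of a $P_2$-value are forced off the class $1\bmod\ell$ at equidistribution cost $O(1/\ell)$ via a Buchstab step. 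For $\ell>\ell_0$ this retains $\gg N(\log N)^{-1-1/(\ell-1)}$ values $n\le N$ with $f(n)=pq$, $p,q\not\equiv 1\bmod\ell$ and $\ell\nmid pq$; these force $\ell\nmid\mathcal{G}(f)$.

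For each small prime $\ell\le\ell_0$ I would run the same sieve machinery with supplementary congruence conditions on $p$ and $q$ (typically $p\equiv q\equiv -1\pmod{\ell^K}$ for odd $\ell$, with an analogous prescription modulo $2^K$ when $\ell=2$), which caps $v_\ell((p-1)(q-1))$ at a constant $C_\ell$ depending only on $\ell$. The main obstacle is the uniformity of Iwaniec's sieve in $f$: the lower bound for the $P_2$ count must not deteriorate as the coefficients of $f$ vary. This requires careful bookkeeping of the sieve density at the primes dividing $\mathrm{disc}(f)$ and the leading coefficient, which the primitivity of $f$ keeps bounded in aggregate; the remaining Mertens-type products and the Bombieri--Vinogradov-style equidistribution inputs for the two prime factors depend only on universal constants, and together supply the required uniform bound leading to the absolute constant $G$.
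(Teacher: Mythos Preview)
Your overall strategy --- produce $n$ with $f(n)$ an almost-prime none of whose prime factors lies in $1\bmod\ell$ --- is the right one, and it is essentially what the paper does. But your proposal has a genuine gap at the decisive technical point.

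The difficulty is the ``Buchstab step'' for large prime factors $p\equiv 1\pmod\ell$. When $p$ has size comparable to $n$ (the range $x/z^{9}\le p\le xz^{9}$ in the paper's notation), neither a direct upper-bound sieve nor a switch to the complementary divisor $r=f(n)/p$ gives a level of distribution that beats the main term. What one needs is equidistribution of the roots of $f(n)\equiv 0\pmod q$ as the modulus $q$ runs over an arithmetic progression. This is the content of T\'oth's theorem (after Duke--Friedlander--Iwaniec), and it is exactly the input the paper invokes in Lemma~6 to handle its middle sum $S_2$. Your reference to ``Bombieri--Vinogradov-style equidistribution inputs'' is not the right tool: Bombieri--Vinogradov controls primes in progressions, not roots of a fixed quadratic modulo varying $q$ in a progression. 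Iwaniec's $P_2$ theorem already relies on the Hooley/DFI equidistribution (without the extra progression condition), so invoking it as a black box and then ``superimposing'' the congruence on $p$ does not circumvent the issue; you must reopen the sieve and feed in T\'oth's refinement. Without naming this input, the middle range is not handled.

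Two further remarks. First, your treatment of small primes is more elaborate than necessary: you do not need to force $p\equiv q\equiv -1\pmod{\ell^K}$, only to forbid $p\equiv 1\pmod{\ell^A}$ for a single absolute $A$ (the paper's Theorem~4 with $h=\ell^A$), since the number of prime factors of $f(n)$ is already $O(1/\delta)$. Second, your uniformity concern is real but resolved differently than you suggest: the paper does not make the sieve constants uniform in the coefficients of $f$; rather it lets $n$ be large depending on $f$ while keeping only $\delta$ and the threshold $h_0$ absolute, which is all that is required for $\mathcal{G}(f)$.

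In summary, the paper proceeds by proving Theorem~4 directly (fundamental-lemma main term; three ranges of $p$; T\'oth for the middle range) and then deducing Theorem~3. Your route via Iwaniec's $P_2$ plus Buchstab could be made to work, but only after inserting the same T\'oth/DFI equidistribution that you have not identified.
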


The proofs of the unconditional results depend on the fundamental lemma
in sieve theory, and the switching principle. When discussing irreducible quadratic polynomials we will have to rely also on quantitative estimates concerning the equidistribution of the roots of quadratic polynomials, a subject initiated by Hooley \cite{H}. We require bounds for averages of Weyl sums associated with these roots, twisted with a Dirichlet character. Such bounds follow from the work of Toth \cite{T}. His work in turn is inspired by important  contribution by Duke, Friedlander and Iwaniec \cite{DFI}. Along the way, we prove an auxiliary result that is of some interest in its own right. 

\begin{theorem}  Let $f$ be an irreducible quadratic polynomial with no fixed prime factor.   There 
exist absolute constants $\delta$ and $h_0$ with the following property.   If $h>h_0$ then 
there are infinitely many $n$ such that $f(n)$ is divisible by no prime below $n^{\delta}$, and by no prime $p\equiv 1\pmod h$. 
\end{theorem}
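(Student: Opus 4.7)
The plan is to combine a one-dimensional sieve controlling small prime divisors of $f(n)$ with quantitative estimates for the distribution of roots of $f$ modulo primes in arithmetic progressions. Since $f$ is an irreducible quadratic with no fixed prime divisor, the number $\rho(p)$ of roots of $f$ modulo $p$ is at most $2$ and averages to $1$ (by Chebotarev, here essentially quadratic reciprocity), so the sequence $\{f(n):n\le N\}$ forms a sieve set of dimension one. For a sufficiently small absolute constant $\delta>0$ the fundamental lemma of sieve theory then yields
\[
\#\bigl\{n\le N:\ p\mid f(n)\Rightarrow p\ge N^{\delta}\bigr\}\ \gg\ N,
\]
and restricting to $n\in(N/2,N]$ lets us replace $N^{\delta}$ by $n^{\delta/2}$ for free. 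Call these $n$ \emph{admissible}. It suffices to show that, for $h$ large, few admissible $n$ are \emph{spoiled} in the sense that $f(n)$ has at least one prime divisor $p\equiv 1\pmod{h}$.

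For any admissible $n$ one has $f(n)\ll N^{2}$ with every prime factor at least $N^{\delta}$, so $f(n)$ carries at most $2/\delta$ prime factors and a union bound (the switching principle) gives
\[
\#\{\text{spoiled admissible }n\}\ \ll\ \sum_{\substack{N^{\delta}\le p\,\ll\,N^{2}\\ p\equiv 1\,(\bmod\,h)}}\#\{n\le N:p\mid f(n)\},
\]
which I would split at $p=N$. In the medium range $N^{\delta}\le p\le N$ the inner count equals $\rho(p)N/p+O(1)$; summing by Siegel--Walfisz---using that the condition $\rho(p)=2$ is itself cut out by a congruence on $p$ modulo the discriminant of $f$---one obtains a total of order $N\log(1/\delta)/\phi(h)+O(N^{1-\eta})$, which is the benign part of the estimate.

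The principal obstacle is the tail $p>N$, where the inner count is the number of roots $\nu_{p}$ of $f\bmod p$ that fall into $[1,N]$. Expanding the indicator of $[0,N/p]$ in a truncated Fourier series and detecting the congruence $p\equiv 1\pmod{h}$ through Dirichlet characters reduces the estimation to twisted Weyl sums
\[
\sum_{p\sim P}\chi(p)\,e\!\left(\frac{k\nu_{p}}{p}\right),\qquad\chi\bmod h,\ k\ne 0,
\]
associated with the roots of $f$. These are precisely the sums for which Toth \cite{T}, building on Duke--Friedlander--Iwaniec \cite{DFI} and Hooley \cite{H}, supplies the necessary power saving; the resulting tail contribution is $O(N/\phi(h)+N^{1-\eta})$. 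Combining both ranges, the spoiled count is $\ll N\log(1/\delta)/\phi(h)+N^{1-\eta}$. Fixing the absolute $\delta$ from the fundamental lemma and then choosing $h_{0}$ so large that $\phi(h)>C\log(1/\delta)$ for every $h>h_{0}$ makes this less than half the sieve lower bound, leaving $\gg N$ admissible unspoiled $n$ in each window $(N/2,N]$, and hence infinitely many in total. The delicate point is obtaining Toth's Weyl-sum bound with uniformity in the character $\chi$ modulo $h$.
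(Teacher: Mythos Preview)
There is a genuine gap. The fundamental lemma for a one--dimensional sieve with sifting limit $z=N^{\delta}$ gives
\[
\#\bigl\{n\le N:\ p\mid f(n)\Rightarrow p\ge N^{\delta}\bigr\}\ \asymp\ N\prod_{p<z}\Bigl(1-\frac{\rho(p)}{p}\Bigr)\ \asymp\ \frac{N}{\delta\log N},
\]
not $\gg N$ as you claim. That missing factor of $1/\log N$ is decisive: once you discard the admissibility condition when bounding the spoiled count, already the medium range contributes $\asymp N\log(1/\delta)/\phi(h)$ and the tail main term contributes $\asymp N/\phi(h)$, each of which dominates $N/\log N$ for every fixed $h$ as $N\to\infty$. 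No choice of $h_{0}$ can rescue the comparison. The paper instead retains the condition $(f(n),P(z))=1$ on the subtracted terms and applies an upper--bound sieve to each of them; this inserts the same factor $1/\log z$ on both sides of the inequality, after which taking $\phi(h)$ large relative to $\log(1/\delta)$ does win.

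There is a second structural difference in the handling of large primes. The paper splits the tail into a narrow middle window $x/z^{9}\le p\le xz^{9}$ and an extreme range $p>xz^{9}$. In the extreme range one switches to the cofactor $r=f(n)/p\ll x/z^{9}$, which is small enough that an upper--bound sieve together with an elementary estimate for $\sum_{r\equiv f(\nu)\ (h)}\rho(r)/r$ suffices---no Weyl sums at all. Only in the narrow middle window, where $r\asymp x$ as well, is Toth invoked, and there the relevant sum runs over \emph{integers} $r$ in a progression $\bmod\ h$, obtained from Toth's bound with an additive twist. Your single cut at $p=N$, followed by Weyl sums over primes all the way to $p\asymp N^{2}$, leaves an error of size $P^{1-\eta}$ at $P\asymp N^{2}$, i.e.\ $N^{2(1-\eta)}$, which for Toth's $\eta$ is far larger than $N$. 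Switching to the cofactor before appealing to equidistribution is precisely what keeps the Weyl--sum input confined to moduli of size $\asymp x$ and makes the argument close.
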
  

This result provides an affirmative answer to a question that Frank Calegari \cite{C} put forward 
in his blog: are there infinitely many values of $n^2+1$ that are not divisible by primes
$\equiv 1 \bmod 2^m$, at least when $m$ is a fixed large integer? 



\smallskip 

\noindent{\bf Acknowledgements.}  The second author is partially supported by a grant from the National Science Foundation and a 
Simons Investigator award from the Simons Foundation.  This paper was begun while the second author was a Gauss Visiting Professor 
at G{\" o}ttingen, supported by the Akademie der Wissenschaften zu G{\" o}ttingen, and completed while he was a Senior Fellow 
at the Institute for Theoretical Studies, ETH Z{\" u}rich.  He thanks both institutions for their warm and generous hospitality.  

\section{Preliminary reductions}  

Although the coefficients of $f$ have no common factor, it may still be that the values $f(n)$ for $n\in {\Bbb N}$ 
have a common factor.  Our first lemma allows us to get rid of this common factor, and restrict attentions to polynomials for which the values have no non-trivial common factor.  

\begin{lemma} 
\label{lem1}  Let $d$ denote the greatest common factor of $f(n)$ for all $n\in {\Bbb N}$.  Then 
$d$ is a divisor of $k!$.  Moreover, with $D= d\prod_{p\le k} p$ we may find a progression $a+ Dn$ such 
that $F(n) = f(a+nD)/d$ is a polynomial with integer coefficients and with $F(n)$ being coprime to $\prod_{p\le k} p$ 
for all $n$.   Finally, 
${\mathcal G}(f)$ is a divisor of $\phi(d) {\mathcal G}(F)$.  
\end{lemma}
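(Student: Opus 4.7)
The lemma has three essentially independent assertions, which I would address in the order stated. For the divisibility $d\mid k!$, my approach is to pass to the binomial basis of integer-valued polynomials. Write $f(x)=\sum_{j=0}^k c_j\binom{x}{j}$ with $c_j=\Delta^j f(0)\in\mathbb Z$; the condition $d\mid f(n)$ for every integer $n$ together with the $\mathbb Z$-linearity of finite differences forces $d\mid c_j$ for every $j$. Converting back to the monomial basis via Stirling numbers of the first kind expresses $k!\,a_i$ as an explicit $\mathbb Z$-linear combination of the $c_j$ with integer coefficients of shape $(k!/j!)\cdot(\text{Stirling number})$, so $d$ divides $k!\,a_i$ for every $i$. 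Primitivity of $f$ then yields $d\mid k!$; in particular every prime divisor of $d$ is at most $k$, and $d=\prod_{p\le k}p^{v_p(d)}$.

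\textbf{Construction of the progression.} For each prime $p\le k$ the minimality of $v_p(d)=\min_n v_p(f(n))$ furnishes some $a_p\in\mathbb Z$ with $v_p(f(a_p))=v_p(d)$. Since $x-y$ divides $f(x)-f(y)$, this property is preserved under $a_p\mapsto a_p+tp^{v_p(d)+1}$. Using Step 1 one has $\prod_{p\le k}p^{v_p(d)+1}=d\prod_{p\le k}p=D$, so the Chinese remainder theorem supplies an integer $a$, determined modulo $D$, with $a\equiv a_p\pmod{p^{v_p(d)+1}}$ for every $p\le k$. For any integer $n$ the identity $D\mid f(a+nD)-f(a)$ combined with $v_p(D)=v_p(d)+1>v_p(f(a))$ forces $v_p(f(a+nD))=v_p(d)$ for each $p\le k$, so $F(n)=f(a+nD)/d$ is coprime to $\prod_{p\le k}p$. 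The Taylor expansion $f(a+nD)=\sum_{j=0}^k(f^{(j)}(a)/j!)D^j n^j$ has integer coefficients $(f^{(j)}(a)/j!)D^j$, and $d$ divides each one: the $j=0$ term is $f(a)$, while for $j\ge 1$ one has $d\mid D\mid D^j$. Hence $F\in\mathbb Z[n]$.

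\textbf{The bound on $\mathcal G(f)$.} Since $d$ is supported on primes $p\le k$ while $F(n)$ is coprime to each such prime, $\gcd(d,F(n))=1$, whence $\phi(f(a+nD))=\phi(dF(n))=\phi(d)\phi(F(n))$. Taking the gcd over $n\in\mathbb N$ gives $\mathcal G(f)\mid \gcd_n \phi(f(a+nD))=\phi(d)\mathcal G(F)$. The only step with any real content is the first, which rests on the interplay between the two natural bases for integer-valued polynomials; the remaining work amounts to a careful application of CRT together with a matching identity for $\phi$, and the one subtlety worth watching is the choice of the exponent $v_p(d)+1$ (not $v_p(d)$) in the CRT congruences, which is precisely what absorbs the linear term of the Taylor correction.
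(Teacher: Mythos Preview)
Your proof is correct and follows essentially the same route as the paper's. Both arguments write $f$ in the binomial basis to see that $d$ divides each coefficient $c_j=\Delta^j f(0)$, then use that converting back to the monomial basis introduces only denominators dividing $k!$ (you phrase this via Stirling numbers, the paper simply cites the denominators in $\binom{x}{j}$); both then pick for each $p\le k$ a residue class $a_p\bmod p^{v_p(d)+1}$ on which $p^{v_p(d)}\Vert f$, glue via CRT, and read off the last claim from multiplicativity of $\phi$. Your write-up is more explicit than the paper's in two places---the verification that $F\in\mathbb Z[n]$ via the Taylor expansion and $d\mid D$, and the final identity $\phi(f(a+nD))=\phi(d)\phi(F(n))$ from $\gcd(d,F(n))=1$---but these are exactly the details the paper suppresses with ``follows at once.''
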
 
\begin{proof}  Write the polynomial $f$ in the basis of binomial coefficients: $f(x) = b_0 \binom{x}{0} + b_1 \binom{x}{1} + \ldots + b_k \binom{x}{k}$.  By considering the values $x=0$, $1$, $\ldots$, $k$ we see that the greatest common factor of all the $f(n)$ is simply the greatest common factor of these coefficients $b_0$, $\ldots$, $b_k$.  Since the denominators appearing in the binomial coefficients all divide $k!$, clearly the common factor $d$ must be a divisor of $k!$.

Suppose $p\le k$ and  $p^{\alpha} \Vert d$.  Then there must exist a residue class $a_p \bmod{p^{\alpha+1}}$ with $p^{\alpha} \Vert f(n)$ for all $n\equiv a_p \bmod{p^{\alpha+1}}$.  Thus by the chinese remainder theorem we may find a progression $a+Dn$ with $F(n) = f(a+nD)/d$ being a polynomial with integer coefficients and  all values of $F$ 
being coprime to $\prod_{p\le k} p$.  This proves our second assertion, and the third follows at once.  
\end{proof}

\begin{lemma} 
\label{lem2}  Let $f$ be an irreducible polynomial in ${\Bbb Z}[x]$, and let $K_f$ be a 
field obtained by adjoining some root of $f$ to ${\Bbb Q}$.  Given a natural number $m$, the following two conditions are equivalent: 

(i) $K_f$ contains the $m$-th roots of unity. 

(ii) All but finitely many of the primes $p$ that divide the values of $f$ satisfy $p \equiv 1\pmod{m}$.  
\end{lemma}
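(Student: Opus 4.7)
The plan is to recast both conditions in terms of Frobenius conjugacy classes in a suitable Galois extension and then invoke Chebotarev's density theorem. Let $L$ denote the Galois closure over $\mathbb{Q}$ of the compositum $K_f \cdot \mathbb{Q}(\zeta_m)$, and set $G = \mathrm{Gal}(L/\mathbb{Q})$, $H = \mathrm{Gal}(L/K_f)$, and $N = \mathrm{Gal}(L/\mathbb{Q}(\zeta_m))$. Because $\mathbb{Q}(\zeta_m)/\mathbb{Q}$ is Galois, $N$ is normal in $G$, and the Galois correspondence yields the key translation: $H \subseteq N$ if and only if $\mathbb{Q}(\zeta_m) \subseteq K_f$, which is exactly condition (i).

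For every prime $p$ unramified in $L$ (all but finitely many) the Frobenius conjugacy class $\mathrm{Frob}_p \subset G$ is well defined. Standard splitting theory, via Dedekind's theorem applied to an order in $K_f$ generated by a root of $f$, shows that outside a finite bad set of primes (dividing the leading coefficient of $f$, its discriminant, and the relevant index) the polynomial $f$ has a root modulo $p$ if and only if there exists a prime of $K_f$ above $p$ of residue degree one, and the latter is equivalent to $\mathrm{Frob}_p$ meeting $\bigcup_{g \in G} gHg^{-1}$. Since $p \mid f(n)$ for some integer $n$ is obviously equivalent to $f$ having a root mod $p$, this characterises the primes of condition (ii). Likewise, $p \equiv 1 \pmod m$ is equivalent to $\mathrm{Frob}_p \subseteq N$ (for $p \nmid m$).

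The implication (i) $\Rightarrow$ (ii) is then immediate: if $H \subseteq N$, normality of $N$ forces $gHg^{-1} \subseteq N$ for every $g \in G$, so any $p$ outside the finite bad set whose Frobenius meets some conjugate of $H$ has $\mathrm{Frob}_p \subseteq N$, giving $p \equiv 1 \pmod m$. For the converse (ii) $\Rightarrow$ (i), suppose all but finitely many primes dividing values of $f$ satisfy $p \equiv 1 \pmod m$. Then no conjugacy class of $G$ lying in $\bigcup_g gHg^{-1} \setminus N$ can be realised as $\mathrm{Frob}_p$ for more than finitely many $p$. By Chebotarev's density theorem every conjugacy class of $G$ occurs as $\mathrm{Frob}_p$ for infinitely many $p$, so this forces $\bigcup_g gHg^{-1} \subseteq N$, and hence $H \subseteq N$, i.e.\ (i).

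The essential ingredient is Chebotarev's theorem, which powers the only non-formal step, (ii) $\Rightarrow$ (i); everything else is elementary Galois theory together with the standard dictionary between factorisation of $f$ modulo $p$ and the cycle structure of $\mathrm{Frob}_p$ on the roots of $f$. The one mildly technical point is controlling the finite exceptional set arising from Dedekind's theorem, but since the lemma only asks about "all but finitely many" primes, these exceptions are harmless.
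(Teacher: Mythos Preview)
Your proof is correct and rests on the same idea as the paper's: both directions are reduced to Frobenius considerations, with Chebotarev's theorem supplying the nontrivial implication (ii)$\Rightarrow$(i). The only difference is cosmetic---you work in the Galois closure $L/\mathbb{Q}$ and track the subgroups $H,N\le G$, whereas the paper applies Chebotarev directly to the relative abelian extension $K_f(\zeta_m)/K_f$, which shortens the argument by avoiding the passage to the Galois closure and the bookkeeping with conjugates of $H$.
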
 
\begin{proof}  If $K_f$ contains the $m$-th roots of unity, then an ideal of norm $p$ in $K_f$ must lie above a 
prime of norm $p$ in ${\Bbb Q}(e^{2\pi i/m})$, and therefore $p$ must be $1 \pmod m$.  Thus (i) implies (ii).  

That (ii) implies (i) follows upon applying the Chebotarev density theorem to the extension $K_f(e^{2\pi i/m})$ of $K_f$, obtained by adjoining (if necessary) the $m$-th roots of unity to $K_f$.    The assumption (ii) means that if there is a 
prime of norm $p$ in $K_f$ then $p \equiv 1 \pmod m$, but then the Frobenius at any such prime in $K_f$ acts trivially on the $m$-th roots of unity.  Thus for almost all primes of degree $1$ in $K_f$ the Frobenius action on 
$K_f(e^{2\pi i/m})$ is the identity, which means that the degree $[K_f(e^{2\pi i/m}),K_f]$ must be $1$.  
\end{proof}

  \begin{lemma} 
  \label{lem3}   Assume Schinzel's Hypothesis.   Let $f$ be an irreducible polynomial of degree $k$, and such that the values of $f$ are coprime to all 
  the primes at most $k$.   Let $\ell$ be a prime, and suppose that $K_f$ contains the $\ell^{\alpha}$-th roots of unity, but not the $\ell^{\alpha+1}$-th roots.  Let $\beta$ be the largest integer such that $\ell^{\beta}$ divides $f(n)-1$ for all $n$.  Then the largest power of $\ell$ that divides ${\mathcal G}(f)$ is at most $\min (\beta, 2\alpha)$.  
      \end{lemma}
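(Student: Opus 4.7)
The plan is to exhibit, for each of the two claimed bounds separately, an integer $n$ for which $v_\ell(\phi(f(n)))$ is small.

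For the bound $v_\ell(\mathcal{G}(f)) \le \beta$, I would choose $n_0$ realising $v_\ell(f(n_0) - 1) = \beta$, which exists by the extremal definition of $\beta$. Setting $h(m) = f(n_0 + \ell^{\beta+1} m)$, every value $h(m)$ is congruent to $f(n_0)$ modulo $\ell^{\beta+1}$, so $v_\ell(h(m) - 1) = \beta$. The polynomial $f$ has no fixed prime divisor (primes $p \le k$ are excluded by hypothesis, and any $p > k$ has too few roots mod $p$ to divide every value of a degree-$k$ polynomial), and this is inherited by $h$: for a prime $q \ne \ell$ the values $n_0 + \ell^{\beta+1} m$ run through every residue mod $q$, while $h(m) \equiv f(n_0) \not\equiv 0 \pmod{\ell}$ provided $\beta \ge 1$ (the case $\beta = 0$ is a minor variant). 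Schinzel's Hypothesis applied to $h$ then yields $m$ with $h(m) = p$ prime, whence $v_\ell(\phi(f(n))) = v_\ell(p - 1) = \beta$.

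For the bound $v_\ell(\mathcal{G}(f)) \le 2\alpha$, I may assume $\beta \ge \alpha + 1$, for otherwise this bound follows from the previous paragraph. By the contrapositive of Lemma 2 with $m = \ell^{\alpha+1}$, there are infinitely many \emph{generic} primes $p_1$ dividing some value of $f$ with $v_\ell(p_1 - 1) = \alpha$. Taking $p_1$ large enough to avoid the discriminant and leading coefficient of $f$, I would pick $n_0$ with $p_1 \mid f(n_0)$ and $p_1^2 \nmid f(n_0)$, and form $g(m) = f(n_0 + p_1 m)/p_1$. This is an integer-coefficient irreducible polynomial of degree $k$, with $g(0) = f(n_0)/p_1$ coprime to $p_1$, and no fixed prime divisor by the argument used for $h$. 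Granted an $m_0$ with $g(m_0) \not\equiv 1 \pmod{\ell^{\alpha+1}}$ and $\ell \nmid g(m_0)$, applying Schinzel's Hypothesis to $\tilde g(t) = g(m_0 + \ell^{\alpha+1} t)$ supplies infinitely many $t$ with $\tilde g(t) = p_2$ prime. The congruence $p_2 \equiv g(m_0) \pmod{\ell^{\alpha+1}}$ gives $v_\ell(p_2 - 1) \le \alpha$; since $p_2$ divides a value of $f$, Lemma 2 gives $v_\ell(p_2 - 1) \ge \alpha$ for all but finitely many $t$, so in fact $v_\ell(p_2 - 1) = \alpha$. For large $t$ we also have $p_2 \notin \{p_1, \ell\}$, so $f(n) = p_1 p_2$ with $\phi(f(n)) = (p_1 - 1)(p_2 - 1)$ of $\ell$-adic valuation $2\alpha$.

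The main obstacle is producing the $m_0$ above. If no $m_0$ with $g(m_0) \not\equiv 1 \pmod{\ell^{\alpha+1}}$ existed, then $f(n_0 + p_1 m) = p_1 g(m) \equiv p_1 \pmod{\ell^{\alpha+1}}$ for every $m$; since $\gcd(p_1, \ell) = 1$, the residues $n_0 + p_1 m$ exhaust $\mathbb{Z}/\ell^{\alpha+1}\mathbb{Z}$, whence $f(n) \equiv p_1 \pmod{\ell^{\alpha+1}}$ for every $n$. This forces $v_\ell(f(n) - 1) = v_\ell(p_1 - 1) = \alpha$ uniformly in $n$, so $\beta = \alpha$, contradicting $\beta \ge \alpha + 1$. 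The supplementary condition $\ell \nmid g(m_0)$ is secured by a similar genericity argument ensuring that $\tilde g$ itself has no fixed $\ell$-divisor. This clean dichotomy, operative precisely in the regime where the $2\alpha$ bound is binding, is what drives the argument.
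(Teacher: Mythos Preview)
Your proof is correct and follows essentially the same approach as the paper: for the bound $\beta$, restrict to a progression mod $\ell^{\beta+1}$ and apply Schinzel to get a prime value $p$ with $v_\ell(p-1)=\beta$; for the bound $2\alpha$, pick a large prime $q$ (your $p_1$) with $v_\ell(q-1)=\alpha$ dividing a value of $f$ exactly once, and apply Schinzel to $f(a+xq)/q$ to produce a second prime $p$ with $v_\ell(p-1)\le\alpha$, so that $\phi(f(n))=(p-1)(q-1)$ has $\ell$-valuation at most $2\alpha$.

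The only difference is that in Part~2 you work harder than necessary. You search for an $m_0$ with $g(m_0)\not\equiv 1\pmod{\ell^{\alpha+1}}$ via a contradiction argument, and then restrict to a further sub-progression before invoking Schinzel. The paper observes directly that, since $\beta\ge\alpha+1$, one has $f(n)\equiv 1\pmod{\ell^{\alpha+1}}$ for \emph{every} $n$, whence $g(m)=f(\cdot)/p_1\equiv p_1^{-1}\not\equiv 1\pmod{\ell^{\alpha+1}}$ for \emph{every} $m$. So no special $m_0$ is needed, and Schinzel can be applied to $g$ itself; the resulting prime $p_2$ automatically satisfies $p_2\not\equiv 1\pmod{\ell^{\alpha+1}}$. (The same remark shows your auxiliary condition $\ell\nmid g(m_0)$ is automatic, since $g(m)\equiv p_1^{-1}\not\equiv 0\pmod\ell$.) Your contradiction argument is really just the contrapositive of this direct observation.
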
 
 \begin{proof}   Since $\beta$ is the largest power of $\ell$ dividing $f(n)-1$, we may find a progression 
 $a \bmod{\ell^{\beta+1}}$ with $f(a) \not \equiv 1 \bmod{\ell^{\beta+1}}$.   Restrict to this progression.  Since the 
 values of $f$ are coprime to the primes below $k$, the polynomial $f(a+n \ell^{\beta+1})$ does not have a common prime factor.  Therefore by Schinzel's hypothesis, we may find values of $n$ with $f(a+n\ell^{\beta+1})=p$ being 
 prime, and the largest power of $\ell$ dividing $p-1$ is $\beta$.  This shows that the largest power of $\ell$ dividing ${\mathcal G}(f)$ is at most $\beta$, which proves the lemma when $\beta \le 2\alpha$.  
   
 
 Now suppose that $\beta \ge 2\alpha+1$.   By Lemma 2 we know that all but finitely many of the primes dividing $f(n)$ are $1\bmod{\ell^{\alpha}}$ and also that there are infinitely many primes dividing $f(n)$ that are $\not\equiv 1\bmod{\ell^{\alpha+1}}$.   Pick a large prime $q \not\equiv 1 \bmod{\ell^{\alpha+1}}$ and a residue class $a \pmod q$ such that $q$ divides $f(a)$ but $q^2 \nmid f(a)$.     Then the polynomial  $f(a+xq)/q$ is irreducible, has no prime common factor (since the values of $f(n)$ have no prime factor at most $k$), and therefore takes prime values infinitely often.  Let $p$ be one such prime value.  Since the values of $f$ are $1\bmod {\ell^{\alpha+1}}$ and $q\not \equiv 1 \bmod{\ell^{\alpha+1}}$ we know that $p$ also is not $1 \bmod{\ell^{\alpha+1}}$.  Therefore the largest power of $\ell$ dividing ${\mathcal G}(f)$ must also divide $(p-1)(q-1)$, which completes our proof. 
   \end{proof}

  \begin{corollary}  Assume Schinzel's Hypothesis.  
   Let $f$ be an irreducible polynomial of degree $k$, and such that all the values $f(n)$ are not divisible by any prime at most $k$.   Let $r$ be the maximal integer such that $K_f$ contains the $r$-th roots of unity.  
  Then $\phi(r)$ divides $k$, and  ${\mathcal G}(f)$ divides $r^2$.  
  \end{corollary}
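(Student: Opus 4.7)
The plan is to decompose the problem prime-by-prime and to invoke Lemma \ref{lem3} for each prime separately. Write $r=\prod_\ell \ell^{\alpha_\ell}$ for its prime factorization. The first preliminary observation is that this decomposition is compatible with the maximality of $r$: if $K_f$ contains primitive $a$-th and primitive $b$-th roots of unity, then it contains $\mathbb{Q}(\zeta_{\mathrm{lcm}(a,b)})$, so the maximal $r$ with $\zeta_r\in K_f$ has $\ell$-adic valuation equal to the largest integer $\alpha_\ell$ such that $\zeta_{\ell^{\alpha_\ell}}\in K_f$. In particular, $\alpha_\ell=0$ for primes $\ell\nmid r$, and the hypothesis of Lemma \ref{lem3} on $\alpha$ is met for every prime $\ell$.

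For the divisibility $\phi(r)\mid k$, the cyclotomic field $\mathbb{Q}(\zeta_r)$ is a subfield of $K_f$, so its degree $\phi(r)$ over $\mathbb{Q}$ must divide $[K_f:\mathbb{Q}]=k$. For the bound ${\mathcal G}(f)\mid r^2$, I would apply Lemma \ref{lem3} one prime at a time: the standing assumption that the values of $f$ are coprime to primes $\le k$ is exactly what Lemma \ref{lem3} requires, so it yields $v_\ell({\mathcal G}(f))\le \min(\beta_\ell,2\alpha_\ell)\le 2\alpha_\ell=v_\ell(r^2)$. For $\ell\nmid r$ we have $\alpha_\ell=0$, and the same bound gives $v_\ell({\mathcal G}(f))=0$ (note $\min(\beta_\ell,0)=0$ since $\beta_\ell\ge 0$), so such primes do not divide ${\mathcal G}(f)$ at all. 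Taking the product over all primes $\ell$ yields ${\mathcal G}(f)\mid r^2$.

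There is no substantial obstacle beyond Lemma \ref{lem3} itself, which has already packaged the Schinzel-based construction of primes $p$ with $\ell^{\alpha_\ell+1}\nmid p-1$. The corollary is essentially the observation that these local bounds assemble cleanly across primes once one identifies $\alpha_\ell$ with the $\ell$-adic valuation of the globally maximal $r$, and the divisibility $\phi(r)\mid k$ is immediate from the subfield $\mathbb{Q}(\zeta_r)\subseteq K_f$.
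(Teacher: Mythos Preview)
Your proof is correct and matches the paper's intent: the corollary is stated there without proof, as an immediate consequence of Lemma~\ref{lem3} applied prime-by-prime together with the tower $\mathbb{Q}(\zeta_r)\subseteq K_f$ forcing $\phi(r)\mid k$. Your explicit verification that the maximal $r$ has $\ell$-adic valuation exactly $\alpha_\ell$ (via $\mathbb{Q}(\zeta_a,\zeta_b)=\mathbb{Q}(\zeta_{\mathrm{lcm}(a,b)})$) and that the case $\alpha_\ell=0$ is handled by $\min(\beta_\ell,0)=0$ are precisely the details one would fill in.
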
     
  
  Now we want to proceed to the general case of a polynomial of degree $k$, not necessarily irreducible.   We begin with a simple observation. 
 
 \begin{lemma} 
 \label{lem4}  If $f$ and $g$ are two coprime polynomials in ${\Bbb Z}[x]$ then for all large primes $q$ either 
 at most one of $f(n)$ or $g(n)$ can be divisible by $q$.  
 \end{lemma}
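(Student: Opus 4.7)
The plan is to produce an integer Bezout-type identity for $f$ and $g$, from which the lemma follows immediately by specialization. Since $f$ and $g$ are coprime as elements of ${\mathbb Z}[x]$, they are also coprime in ${\mathbb Q}[x]$, which is a principal ideal domain, so the Euclidean algorithm in ${\mathbb Q}[x]$ yields polynomials $a, b \in {\mathbb Q}[x]$ with $a(x) f(x) + b(x) g(x) = 1$. Clearing denominators, or more canonically invoking the classical fact from elimination theory that the resultant $R := \mathrm{Res}(f, g)$ lies in the ideal $(f, g) \subset {\mathbb Z}[x]$, I obtain $A, B \in {\mathbb Z}[x]$ and a fixed nonzero integer $R$ (depending only on $f$ and $g$) with
\[ A(x) f(x) + B(x) g(x) = R \]
as an identity in ${\mathbb Z}[x]$.

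Specializing at any integer $n$ shows that every common divisor of $f(n)$ and $g(n)$ must divide the fixed integer $R$. Consequently, for any prime $q$ with $q \nmid R$ — in particular, for any $q > |R|$ — no integer $n$ can have $q$ dividing both $f(n)$ and $g(n)$ simultaneously, which is exactly what the lemma asserts. I do not foresee any real obstacle here: the argument is essentially one line once the integer-coefficient Bezout identity is in hand, and the existence of such an identity with right-hand side equal to the resultant is entirely classical (it can be read off from the Sylvester matrix, or obtained by scaling the rational Bezout relation). One could equally well phrase the proof as saying that the reductions $\overline{f}, \overline{g} \in {\mathbb F}_q[x]$ remain coprime for all primes $q \nmid R$, so they share no roots in ${\mathbb F}_q$.
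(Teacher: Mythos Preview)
Your argument is correct and essentially identical to the paper's: both produce an integer Bezout relation $u(x)f(x)+v(x)g(x)=c$ with $c$ a nonzero integer (the paper does not name $c$ as the resultant, but obtains it the same way via the Euclidean algorithm over ${\Bbb Q}[x]$ and clearing denominators), and then observe that any prime $q\nmid c$ cannot divide both $f(n)$ and $g(n)$.
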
  
 \begin{proof}  By the Euclidean algorithm we may find polynomials $u$ and $v$ with integer coefficients and 
 a non-zero integer $c$ such that $f(x)u(x) + g(x) v(x) = c$.  Thus if $q\nmid c$, then $q$ can divide at most one of 
 $f(n)$ or $g(n)$.  
 \end{proof} 
 
 Now we are ready for the general form of Lemma \ref{lem3}. 
 
 \begin{lemma} 
 \label{lem5} Assume Schinzel's Hypothesis.  Let $f$ be a polynomial of 
  degree $k$ whose values contain no prime factor at most $k$.   Suppose $f$ factors as $f_1^{a_1} \cdots f_s^{a_s}$ 
  where the $f_j$ are distinct irreducible polynomials and $a_j \ge 1$. Let $\ell$ be a prime, and suppose $\ell^{\alpha_j}$ is the largest power of $\ell$ such that $K_{f_j}$ contains the $\ell^{\alpha_j}$-th roots of unity.   Then the largest power of $\ell$ dividing ${\mathcal G}(f)$ is at most $2(\alpha_1 +\ldots +\alpha_s)$. 
 \end{lemma}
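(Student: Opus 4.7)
The plan is to prove Lemma \ref{lem5} by a simultaneous generalisation of Lemma \ref{lem3}, handling each irreducible factor by one of the two constructions used in that proof and then splicing the constructions via a single application of Schinzel's hypothesis.

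For each $j$ let $\beta_j$ denote the largest integer such that $\ell^{\beta_j}\mid f_j(n)-1$ for every $n$, and split $\{1,\ldots,s\}$ as $J_1=\{j:\beta_j\le 2\alpha_j\}$ and $J_2=\{j:\beta_j\ge 2\alpha_j+1\}$. For each $j\in J_2$ I would invoke Lemma \ref{lem2} to pick a large prime $q_j\not\equiv 1\pmod{\ell^{\alpha_j+1}}$ together with a residue $b_j$ satisfying $q_j\Vert f_j(b_j)$, choosing the $q_j$ pairwise distinct and large enough that Lemma \ref{lem4} ensures $q_j\nmid f_i(n)$ whenever $i\ne j$ and $q_j\mid f_j(n)$. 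For each $j\in J_1$ fix $n_j^\ast$ with $v_\ell(f_j(n_j^\ast)-1)=\beta_j$. By the Chinese Remainder Theorem I would select $a$ with $a\equiv b_j\pmod{q_j^2}$ for $j\in J_2$ and $a\equiv n_j^\ast\pmod{\ell^{\beta_j+1}}$ for $j\in J_1$, arranged additionally so that $\ell\nmid f_j(a)$ for every $j$ (which is automatic when $\ell\le k$). Then set $Q=\prod_{j\in J_2}q_j$, $L=\ell^T$ with $T$ larger than $\max_j(2\alpha_j,\beta_j)+1$, and $M=QL$.

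Next I would introduce the polynomials $g_j(m):=f_j(a+Mm)/q_j\in\mathbb{Z}[m]$ for $j\in J_2$ and $h_j(m):=f_j(a+Mm)\in\mathbb{Z}[m]$ for $j\in J_1$, and check that their product $P(m):=\prod_{j\in J_2}g_j(m)\cdot\prod_{j\in J_1}h_j(m)$ has no fixed prime divisor. For $p\le k$ the hypothesis on $f$ kills the possibility; for $p=q_i$ the point $m=0$ suffices, since $q_i^2\nmid f_i(a)$ gives $q_i\nmid g_i(0)$ while Lemma \ref{lem4} gives $q_i\nmid g_j(0),h_j(0)$ for $j\ne i$; and for any remaining prime $p>k$ a Taylor expansion argument (using $p\nmid M$) converts $p\mid g_j(m)$ or $p\mid h_j(m)$ identically in $m$ into $p\mid f_j(y)$ for every $y$, contradicting primitivity of $f_j$. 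Schinzel's hypothesis then produces infinitely many $m$ for which every $g_j(m)=p_j$ ($j\in J_2$) and every $h_j(m)=p_j$ ($j\in J_1$) is prime; taking $m$ large guarantees that these primes and the $q_j$'s are all distinct.

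For such $m$ one has $f(a+Mm)=\prod_{j\in J_2}(q_jp_j)^{a_j}\prod_{j\in J_1}p_j^{a_j}$, so
\[
v_\ell\bigl(\phi(f(a+Mm))\bigr)=\sum_{j\in J_2}\bigl(v_\ell(q_j-1)+v_\ell(p_j-1)\bigr)+\sum_{j\in J_1}v_\ell(p_j-1).
\]
For $j\in J_2$ the inequality $\beta_j\ge 2\alpha_j+1$ together with $\ell^T\mid M$ forces $q_jp_j\equiv f_j(a+Mm)\equiv 1\pmod{\ell^{\alpha_j+1}}$; combined with $v_\ell(q_j-1)\le\alpha_j$, a short valuation computation (if $v_\ell(p_j-1)\ne v_\ell(q_j-1)$ then $v_\ell(q_jp_j-1)$ equals the smaller of the two, which is at most $\alpha_j$) forces $v_\ell(p_j-1)=v_\ell(q_j-1)\le\alpha_j$, giving a contribution of at most $2\alpha_j$. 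For $j\in J_1$ the condition $\ell^T\mid M$ yields $p_j=f_j(a+Mm)\equiv f_j(a)\pmod{\ell^{\beta_j+1}}$, so $v_\ell(p_j-1)=\beta_j\le 2\alpha_j$. Summing delivers the stated bound $2\sum_j\alpha_j$. The main obstacle I anticipate is the verification that $P(m)$ has no fixed prime divisor; this requires coordinating the auxiliary primes $q_j$, the high $\ell$-power in $M$, Lemma \ref{lem4}, and the primitivity of each factor, with the Taylor/degree argument ruling out large fixed divisors being the most delicate step.
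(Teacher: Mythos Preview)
Your overall strategy mirrors the paper's and is sound, but the Chinese Remainder step contains a genuine gap. You ask for $a$ with $a\equiv n_j^\ast\pmod{\ell^{\beta_j+1}}$ simultaneously for all $j\in J_1$; but the moduli $\ell^{\beta_j+1}$ are all powers of the \emph{same} prime $\ell$, so CRT does not apply, and these conditions can be inconsistent. For a concrete example take $\ell=2$, $f_1(n)=2n+3$, $f_2(n)=2n+1$ (so $f=f_1f_2$ has degree $2$ and only odd values). Here $\alpha_1=\alpha_2=1$ and $\beta_1=\beta_2=1$, so both indices lie in $J_1$; but $v_2(f_1(a)-1)=1$ forces $a$ even while $v_2(f_2(a)-1)=1$ forces $a$ odd. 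No single $a$ achieves both minima at once, and the parenthetical clause about arranging $\ell\nmid f_j(a)$ cannot repair this, since $a\bmod\ell$ is already over-determined.

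The paper avoids the difficulty by reversing the order of choices: one first fixes any $a$ with $f_j(a)\neq 1$ for every $j$ (trivially possible, as each $f_j$ is nonconstant), then \emph{defines} $\beta_j:=v_\ell(f_j(a)-1)$, and restricts to the progression $n\equiv a\pmod{\ell^{1+\sum_j\beta_j}}$. With the $\beta_j$ tied to an already-chosen $a$, compatibility is automatic. The auxiliary primes $q_j$ are then introduced for those $j$ with $\beta_j>\alpha_j$ (your threshold $2\alpha_j$ would also work), and the rest of your argument---the valuation identity forcing $v_\ell(p_j-1)=v_\ell(q_j-1)\le\alpha_j$ for the $J_2$ factors, and the direct bound for the others---goes through unchanged. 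One minor further point: in verifying that $P(m)$ has no fixed prime divisor, the case of a large prime $p\nmid M$ should be handled by the degree bound $\deg P\le k<p$ on the product, not factor by factor, since a prime can divide a product identically without dividing any single factor identically (as $2\mid m(m+1)$ illustrates).
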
  
 
 \begin{proof}  Choose a value $a$ such that $f_j(a) \neq 1$ for all $j$.  Then $\ell^{\beta} \Vert \prod_{j} (f_j(a)-1)$ 
for some non-negative integer $\beta$.  Below we shall restrict ourselves to the progression $n\equiv a\bmod{\ell^{\beta+1}}$.   Clearly on this 
progression we have $\ell^{\beta_j} \Vert (f_j(n)-1)$ for all $j$, and some non-negative integers $\beta_j$.  
 
 If $\beta_j \le \alpha_j$ put $q_j=1$ and $a_j=1$.  If $\beta_j >\alpha_j$ then select a large prime $q_j \not\equiv 1\bmod{\ell^{\alpha_j+1}}$, and a residue class $a_j \bmod{q_j}$ such that $q_j \Vert f_j(a_j)$ --- this is possible in view of Lemma 2, and $q_j$ is chosen large enough so that it does not divide the discriminant of $f_j$.  Our choice of $q_j$ will be such that no two $q_j >1$ are equal.  Now consider $n$ lying in the progressions $a \bmod{\ell^{\beta+1}}$ and $a_j\bmod{q_j}$ for all $j$.   We apply Schinzel's Hypothesis to the polynomials $f_j(n)/q_j$ for $n$ in this progression.  The primes $q_j$ are chosen large enough so that they do not divide the resolvent of any two polynomials $f_j$.  Then there can be no fixed prime common to all the $f_j$, and  Schinzel's Hypothesis is applicable.  
 
 What is the power of $\ell$ dividing $\phi(\prod_{j} f_j(n)^{a_j})$?  By construction this is the power of $\ell$ in $\prod_{j} \phi(q_j) (f_j(n)/q_j-1)$.  The terms with $q_j=1$ contribute a power of $\beta_j\le \alpha_j$, while the terms with $q_j>1$ contribute a power of $2\alpha_j$.  This completes our proof.
 \end{proof}

 \section{Proof of Theorem 1}  
 
 With the results from Section 2 in hand, we can finish the proof of  Theorem 1 in a few sentences.  Given a primitive polynomial $f=f_1^{a_1} \cdots f_s^{a_s}$, by passing to a progression (as in Lemma 1) we may find a polynomial $F = F_1^{a_1} \cdots F_s^{a_s}$ 
 with $F(n)$ coprime to all the primes below $k$ and with ${\mathcal G}(f)$ being a divisor of $\phi(k!) {\mathcal G}(F)$.   Further, the fields obtained by adjoining a root of $f_j$ to ${\Bbb Q}$ are the same as the fields obtained by adjoining a root  of $F_j$ to ${\Bbb Q}$.    Thus, appealing to Lemma 5, we find that ${\mathcal G}(F)$ is a divisor of $r_1^2 \cdots r_s^2$.   This completes our proof.

 \section{Polynomials that split completely: Proof of Theorem 2} 
 
We turn to the proof of Theorem 2. If $f$ is primitive and splits completely into linear factors, then
$$ 
f(n) = \prod_{j=1}^s (c_j n + d_j)^{a_j} 
$$
with the $c_j$ non-zero, $(c_j,d_j)=1$ for all $j$, and the rationals $d_j/c_j$ distinct. We may suppose that $f$ has positive leading coefficient, and then we can arrange matters such that the $c_j$ are all positive. We require the ``square-free kernel" of $f$, given by
$$ 
g(n) = \prod_{j=1}^s (c_j n + d_j). 
$$
Further we suppose that $f$ is not divisible by primes at most $2k+1$, with $k$ the degree of $f$. An obvious variant of Lemma 1 allows us to do so.  
 
First let us show that no prime $\ell > 2k+1$ divides ${\mathcal G}(f)$. 
By passing to a progression $\bmod{\ell}$ we 
may suppose that $\ell \nmid f(n)$ for all $n$.   Now $\ell$ can divide $\phi(f(n))$ if and only if $f(n)$ is divisible by 
some prime $p\equiv 1\bmod \ell$.  Consider the sifting problem of finding $n$ such that $c_jn\not \equiv -d_j \bmod {p}$ for all $j$, and all primes $p\equiv 1\bmod \ell$.   This is a sieve of dimension $s/(\ell-1) <1/2$,
and the sequence to be sifted, with $n\le x$, has level of distribution $x^{1-\epsilon}$, for any $\epsilon >0$.
Sieve theory in dimension below half therefore shows that there are (many) values of $n$ with the desired property  (see, for example, \cite[Theorem 11.21]{FI}).

\smallskip 

Now consider a prime $\ell \le 2k+1$, where we wish to show that the power of $\ell$ dividing ${\mathcal G}(f)$ is bounded.   Let $z$ be a large parameter, and let $P(z)$ denote the product of all primes below $z$. Further, let $A$ be a large natural number.  We seek a lower bound for
$$ 
S(x,z) = \sum_{\substack{n\le x \\ (g(n), P(z)) =1 }} \Big(1 - \sum_{\substack{ z <p \le x \\ p\equiv 1 \bmod{\ell^A}} } \sum_{\substack { j \\ p| c_j n+d_j }} 1 \Big). 
$$ 
Here, we begin by estimating the positive contributions. Let $\varrho(p)$ denote the number of incongruent solutions
to $g(n) \equiv 0 \mod p$. For large primes $p$ we then have $\varrho(p)=s$, and hence the product
\begin{equation}\label{3.1} {\mathfrak S} = \prod_p \Big(1-\frac{\varrho(p)}{p}\Big) \Big(1-\frac1p\Big)^{-s} \end{equation}
converges to a non-zero number. We may now apply the 
fundamental lemma of sieve theory in dimension $s$. This tells us that there is a positive real number $C$
such that for all $x \ge z^{9s}$, one has  
$$ 
\sum_{\substack{n\le x \\ (g(n), P(z)) =1 }} 1 \ge C {\frak S} \frac{x}{(\log z)^{s}}, 
$$ 
see for example \cite[Theorem 11.22]{FI}.
Note here that $\mathfrak S$ depends on $g$ but $C$ does not.

Now we turn to the contribution of the negative terms in the sum defining $S$. 
By reasons of symmetry it is enough to think of the case $p|c_1 n+d_1$,  say. 

Consider first the terms with $z < p \le x/z^{9s}$.  In this case, we apply an upper bound sieve,
for example again \cite[Theorem 11.22]{FI}. Then, with $\mathfrak S$ as above, we find that
contribution is bounded above by
$$ 
C' {\mathfrak S} \sum_{\substack {z < p \le x/z^{9s} \\ p\equiv 1\bmod{\ell^A}}} \frac{x}{p (\log z)^s} $$
where again $C'$ is a suitable positive constant that does not depend on $g$. We choose $x=z^{30s}$. Then, since $z$ is large, the above does not exceed
$$ 
\le 2C' {\mathfrak S} \frac{x}{(\log z)^s} \frac{1}{\ell^A} \log \frac{\log x/z^{9s}}{\log z} = 2C'
{\mathfrak S} \frac{x}{(\log z)^s} \frac{\log (21s)}{\ell^A}. 
$$ 

Now consider the contribution of larger values of $p$.  Here we employ the switching principle:  write 
$c_1 n + d_1 = r p$, and then sum over $r$ instead.   We must have $r\ll z^{9s}$ with $r$ composed only 
of prime factors above $z$, and moreover we must have $n$ in a particular residue class $\bmod{r\ell^A}$ (since 
$r|(c_1 n+d_1)$ and we must have $(c_1 n + d_1)/r \equiv 1\bmod{\ell^A}$, choosing to forget that it must also be prime).   Once more applying the upper bound sieve, the desired contribution is 
$$ 
\le C'{\mathfrak S} \sum_{\substack{ r\ll z^{9s} \\ (r,P(z))=1}} \frac{x}{(r\ell^A) (\log z)^s} \le C''{\mathfrak S} \frac{x}{(\log z)^s} 
\frac{\log (10s)}{\ell^A} 
$$ 
wheere now $C''$ is a suitable constant with $C''\ge C'$.

Combining the two upper bounds with the lower bound, we infer that (recall $x=z^{30s}$)
$$ 
S(z^{30s},z) \ge {\mathfrak S} \frac{x}{(\log z)^{s}} \Big(  C - C'' \frac{3s \log (21s)}{\ell^A} \Big), 
$$ 
We choose $A$ so large that $S(z^{30s},z) \ge\frac12 C {\mathfrak S} \frac{x}{(\log z)^{s}}.$
Thus we have produced $n$ for which $g(n)$ has at most $30s^2$ prime factors, and 
none of these prime factors can be $1\bmod{\ell^A}$. Then $f(n)$ will have at most $30k^2$ such prime factors.  Therefore the exponent of $\ell$ dividing ${\mathcal G}(f)$ 
may be bounded in terms of $A$ and $k$, as claimed.   

\section{Irreducible quadratic polynomials} 
Now familiar arguments show that Theorem 3 follows from Theorem 4. 
Thus it remains to establish the latter, and this is our
main task in this section.    The basic strategy is similar to that applied in the previous section.

Let $f(x) = ax^2 + bx+c \in{\mathbb Z}[x]$ be a primitive irreducible quadratic polynomial with positive leading coefficient and no fixed prime divisor.  Let $D = b^2 -4ac$ denote the discriminant of $f$, and put $H= 2a |D| h$.     
We fix a progression $\nu \bmod H$ such that $(f(\nu), H) =1$, and assume that $1\le \nu \le H$.   
Let $x$ be large, and put $z= x^{\delta}$ for a suitably small $\delta >0$.   Put $P^{\dagger} = \prod_{p\le z, p\nmid H} p$.  
We wish to bound from below 
\begin{equation} 
\label{4.1} 
S = \sum_{\substack{x\le n\le 2x \\ n \equiv \nu \bmod H \\ (f(n),P^{\dagger}) = 1 } } \Big(1 - \sum_{\substack{p\equiv 1 \bmod{h} \\ z\le p \le f(2x)\\ p\mid f(n)} } 1 \Big).  
\end{equation} 

We start with the positive term in $S$.   An application of the fundamental lemma from sieve theory, for example in the form of \cite[Thm. 6.12]{FI}, shows that 
$$ 
\sum_{\substack{ n\le x \\ n \equiv \nu \bmod H \\ (f(n),P^{\dagger}) = 1 } } 1 \ge \frac{1}{2}  \frac{x}{H} \prod_{\substack{ p\le z \\ p\nmid H}} \Big(1-\frac{\varrho(p)}{p} \Big),
$$ 
where ${\varrho}(p)$ denotes the number of solutions to the congruence $f(x) \equiv 0 \bmod p$.  
For a prime $p \nmid H$ (and so in particular $p\nmid 2aD$), it is easy to verify that ${\varrho}(p) =1 +  (\frac{D}{p})$ (where $(\frac{D}{\cdot})$ denotes the Kronecker--Legendre symbol, which is a Dirichlet character $\bmod {\ |D|}$).   On average $\varrho({p}) = 1$, and so for large $z$ we have 
$$ 
\prod_{\substack{p\le z \\ p\nmid H}} \Big(1 -\frac{\varrho(p)}{p}\Big)  \sim \frac{e^{-\gamma}}{\log z} \frac{H}{\phi(H)}  \prod_{\substack{p\le z \\ p\nmid H}} \Big( 1- \frac{\varrho(p)}{p} \Big) \Big( 1- \frac 1p \Big)^{-1}. 
$$ 
Thus, with ${\mathfrak S}= \prod_{p\nmid H} (1-{\varrho(p)}/p)(1-1/p)^{-1} >0$ the positive term in $S$ exceeds 
$$ 
\frac{x}{4\phi(H)} \frac{{\mathfrak S}}{\log z}. 
$$


It remains to estimate the contributions from negative terms to (\ref{4.1}), which we split into three parts depending on the 
size of $p$.   Divide the primes $z \le p\le f(2x)$ into the three ranges $z \le p \le x/z^9$, $x/z^9 \le p \le xz^9$ and $xz^9 <p \le f(2x)$.  
Corresponding to these ranges, define 
$$
 S_1 = \sum_{\substack{x\le  n\le 2 x \\ n\equiv \nu \bmod H \\ (f(n),P^{\dagger}) = 1 } }
 \sum_{\substack{p\equiv 1 \bmod{h} \\ z\le p \le xz^{-9}\\ p\mid f(n)} } 1,
$$
similarly define $S_2$ and $S_3$.  
Thus 
\begin{equation}\label{4.2}
S\ge \frac{1}{4} \frac{{\mathfrak S}}{\log z}  \frac{x}{\phi(H)}-S_1 -S_2 -S_3. 
\end{equation}

The sum $S_1$ may be upper bounded as in the previous section. In the current context an upper bound sieve produces
$$ 
S_1 \ll \frac{{\mathfrak S}}{\phi(H)}  \sum_{\substack{p\equiv 1 \bmod{h} \\ z\le p \le xz^{-9}}} \frac{x}{p\log z} \ll \frac{{\mathfrak S}}{\phi(H)} 
\frac{x}{\log z} \frac{ \log((1/\delta)-9)}{\phi(h)} \le \frac{1}{20} \frac{\mathfrak S}{\phi(H)} \frac{x}{ \log z}, 
$$
provided $h$ is large enough compared to $1/\delta$.

The sum $S_3$ also accepts treatment following the pattern laid out in the preceding section.  If $p | f(n)$ and $p>xz^9$, we put
 $f(n) =p r$ so that $r \le f(2x)/p \ll xz^{-9}$ with $r\equiv f(\nu)\bmod h$.  Given such a small value of $r$, the 
problem then amounts to requiring $f(n)$ to be a multiple of $r$ (which means that $n$ lies in one of ${\varrho(r)}$ residue classes $\bmod{\ r}$), and also lying in the residue class $\nu \bmod H$.  
Therefore, once again by the sieve,  
\begin{align*}
S_3 &\le \sum_{\substack{ r\ll xz^{-9} \\ r\equiv f(\nu) \bmod h \\ (r,P^{\dagger}H)=1}} \sum_{\substack{x\le n\le 2x \\ r| f(n) \\ n\equiv \nu \bmod H \\ (n, P^{\dagger})=1}} 
1 \ll \frac{{\mathfrak S}}{\phi(H)} \frac{x}{\log z} \sum_{\substack{ r\ll xz^{-9} \\ r \equiv f(\nu) \bmod h \\ (r,P^{\dagger}H)=1}} \frac{\varrho(r)}{r} \\
&\le  \frac{{\mathfrak S}}{\phi(H)} \frac{x}{ \log z} \frac{C(\delta)}{\phi(h)} \le \frac{1}{20} \frac{{\mathfrak S}}{\phi(H)} \frac{x}{ \log z},
\end{align*} 
in which $C(\delta)$ denotes a constant depending only on $\delta$, and $h$ is assumed to be large in comparison with $1/\delta$.

\medskip

Finally we turn to the sum $S_2$.
As before, we write $n = pr$ with $p\equiv 1 \bmod h$ and $xz^{-9} \le p\le xz^{9}$ so that the complementary variable $r$ satisfies 
$r\equiv f(\nu) \bmod h$ and $x/z^{9} \ll r \ll xz^9$.   
We sum over $r$ instead of $p$ and exchange the order of summation to see that
\begin{equation}
\label{4.3} 
S_2 \ll \sum_{\substack{xz^{-10}\le r\le xz^{10} \\ r \equiv f(\nu) \bmod h \\ (r,HP^{\dagger})=1}}{\sum_{\substack{ x \le n \le 2x \\ n\equiv \nu \bmod H \\ r|f(n) \\ (f(n), P^{\dagger}) =1 }}  1} . 
\end{equation}
Anticipating an application of Poisson summation, it is convenient to smooth the sum over $n$ above.   For concreteness,  let 
$\Phi:\mathbb R \to \mathbb R$ be the smooth function defined by $\Phi(0)=1$ and for $t\ne 0$ by
$$ 
\Phi(t) = \Big(\frac{\sin t}{t}\Big)^2.  
$$
Since $\Phi$ is always non-negative, and $\Phi(t) \gg 1$ for $1\le t\le 2$, we may bound $S_2$ by $ \ll S_2^{\prime}$ where 
\begin{equation} 
\label{4.4} 
 S'_2=\sum_{\substack{ xz^{-10}\le r\le xz^{10} \\ r\equiv f(\nu) \bmod h \\ (r,H)=1}}
\sum_{\substack{n\in \mathbb Z \\ n \equiv \nu \bmod H \\ r|f(n) \\ (f(n), P^\dagger)=1 }} \Phi\Big(\frac{n}{x}\Big). 
\end{equation} 

We treat the sieving condition $(f(n),P^\dagger)=1$ by  
Selberg's upper bound sieve.  Put $\theta_1=1$ and let
$\theta_d$ be real numbers with $\theta_d =0$ unless $d\le z$ is square-free with $d| P^{\dagger}$.    
Write
 \begin{equation}\label{weights}
 \lambda_d = \sum_{[d_1,d_2] =d} \theta_{d_1} \theta_{d_2},  
\end{equation}
so that $\lambda_d$ is non-zero only for $d$ that are square-free divisors of $P^{\dagger}$ with $d\le z^2$.   
With this notation 
\begin{equation}\label{S}
S'_2 \le 
 \sum_{\substack{ xz^{-10}\le r\le xz^{10} \\ r\equiv f(\nu) \bmod h \\ (r,H)=1} } 
  \sum_{\substack{n\in\mathbb Z \\ n \equiv \nu \bmod H \\ r|f(n)}} 
 \Phi\Big(\frac{n}{x}\Big) \Big( \sum_{d| (P^{\dagger},f(n))}  \theta_d \Big)^2 =   \sum_{d|P^\dagger} \lambda_d T(d), 
\end{equation}
where
\begin{equation} 
\label{4.5} 
T(d) = \sum_{\substack{ xz^{-10} \le r \le xz^{10} \\ r\equiv f(\nu) \bmod h \\ (r,H)=1} } 
 \sum_{\substack { n\in\mathbb Z \\ n \equiv \nu \bmod H \\ [r,d] | f(n) }}  \Phi\Big(\frac{n}{x}\Big) . 
 \end{equation}

\begin{lemma}\label{lemmaU} With notations as above, uniformly for $d\le z^2$ with $d| P^{\dagger}$ 
we have 
$$ 
T(d) = \frac{x}{H}   \sum_{\substack{xz^{-10} \le r\le xz^{10} \\ r\equiv f(\nu) \bmod h \\ (r,H)=1 }} 
\frac{\varrho([d,r])}{[d,r]} + O(x^{63/64}z^{15}).    
$$ 
%
\end{lemma}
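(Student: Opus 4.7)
\medskip

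\noindent\textbf{Proof plan.} The approach is to evaluate the inner sum over $n$ in \eqref{4.5} by Poisson summation, isolate the main term from the zero frequency, and control the non-zero frequencies by averaging Weyl sums associated with the roots of $f$ over $r$.

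First I would fix $d\mid P^{\dagger}$ and, for each $r$ in the outer sum, combine the conditions $n\equiv \nu \bmod H$ and $f(n)\equiv 0\bmod [r,d]$ via the Chinese Remainder Theorem into exactly $\varrho([r,d])$ residue classes modulo $M:=[r,d]H$. (This uses that $(r,H)=1$ and $d\mid P^{\dagger}$ forces $(d,H)=1$, so $([r,d],H)=1$.) Poisson summation applied to each class $\alpha\bmod M$ yields
$$ \sum_{n\equiv \alpha\bmod M}\Phi(n/x)=\frac{x}{M}\sum_{k\in\mathbb Z}\widehat\Phi\bigl(\tfrac{kx}{M}\bigr)\, e\bigl(\tfrac{k\alpha}{M}\bigr). $$
The $k=0$ term, summed over the $\varrho([r,d])$ classes and then over $r$ in the allowed range and progression, reproduces precisely the main term $\frac{x}{H}\sum_{r}\varrho([r,d])/[r,d]$ of the lemma (with the normalisation of $\widehat\Phi$ absorbed accordingly).

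Second, I would bound the contribution of $k\ne 0$. Since $\Phi(t)=(\sin t/t)^2$ has Fourier transform of bounded support, $\widehat\Phi(kx/M)$ vanishes once $|k|\gg M/x$; and because $[r,d]\le r z^{2}\le x z^{12}$, the effective range of $k$ is $|k|\ll H z^{12}$. The inner sum $\sum_\alpha e(k\alpha/M)$ factors (by CRT again) into a harmless factor coming from the residue class modulo $H$ and the Weyl-type sum
$$ W_k(r,d):=\sum_{\substack{\alpha\bmod [r,d]\\ f(\alpha)\equiv 0}}e\!\left(\frac{k\alpha}{[r,d]}\right), $$
indexed by the roots of $f$ modulo $[r,d]$. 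Thus the problem reduces to estimating
$$ \sum_{\substack{x z^{-10}\le r\le x z^{10}\\ r\equiv f(\nu)\bmod h\\ (r,H)=1}} W_k(r,d) $$
for each $k\ne 0$ in the effective range, uniformly in $d$.

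Third, to extract cancellation in $r$ despite the congruence condition $r\equiv f(\nu)\bmod h$, I would detect the condition by Dirichlet characters modulo $h$, reducing matters to averages of the form $\sum_{r}\chi(r)W_k(r,d)$. This is exactly the setting of Hooley-style equidistribution of roots of quadratic congruences in arithmetic progressions: the refinement of \cite{DFI} due to Toth \cite{T} provides the needed bound with a power saving in $x$. Inserting this bound, summing trivially over $k$ in its effective range and over characters mod $h$, and absorbing polylogarithmic losses into $z^{O(1)}$, produces the stated error $O(x^{63/64}z^{15})$.

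The hard part will be the last step: keeping Toth's Weyl-sum bound uniform in the auxiliary parameters $d$, $h$, $k$, and the character $\chi$ so that the power saving survives all these summations. The exponent $63/64$ reflects a conservative accounting of the available saving, with the $z^{15}$ comfortably accommodating the effective range of $k$, the bound $[r,d]\ll x z^{12}$, and the losses from characters and dyadic localisation.
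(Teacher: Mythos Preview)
Your plan is correct and follows essentially the same route as the paper: Poisson in $n$, main term from the zero frequency, and non-zero frequencies controlled via Toth's Weyl-sum bound averaged over $r$.

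Two small differences are worth flagging. First, the paper absorbs the condition $n\equiv\nu\bmod H$ by the change of variable $n\mapsto \nu+nH$ (working with $f_\nu(n)=f(\nu+nH)$) rather than by CRT; this is cosmetically simpler and avoids carrying the extra factor modulo $H$ through the Fourier analysis. Second, to detect $r\equiv f(\nu)\bmod h$ the paper uses \emph{additive} characters $e(jr/h)$ rather than Dirichlet characters, because Toth's estimate (his formula~(16)) is already stated with an additive twist; this saves a reduction step. One point you should make explicit: Toth's bound is for sums of the shape $\sum_r \varrho_m^{(\nu)}(Ar)$ with $A$ fixed, whereas your $W_k(r,d)$ has modulus $[r,d]$, which does not have this form as $r$ varies. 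The paper handles this by first grouping terms according to $u=(r,d)$, so that $[r,d]=r\,(d/u)$ with $d/u$ fixed and the condition $(r,d)=u$ imposed by M\"obius; only then does Toth's bound apply directly. With that decomposition in place, your argument goes through as written.
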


We postpone the proof of this lemma to the next section, and proceed to complete the estimation of $S_2$.
The next lemma provides an asymptotic formula for the sum over $r$ appearing in Lemma \ref{lemmaU}. 

\begin{lemma}\label{Lemma7}  Let $D_0$ be the fundamental discriminant corresponding to $D$, so that $D/D_0$ is a perfect 
square.    If $d\le z^2$ is a divisor of $P^{\dagger}$ then 
$$ 
\sum_{\substack{ xz^{-10} \le r\le xz^{10} \\ r\equiv f(\nu) \bmod h \\ (r,H)=1 }} \frac{\varrho([d,r])}{[d,r]} = (20 \log z) 
\frac{g(d)}{d} \frac{\phi(H)}{H} \prod_{p\nmid H} \Big( 1+ \frac{(\frac{D}{p})}{p}\Big) \frac{1}{\phi(h)} \Big( 1+ \delta(D_0|h)\Big) + O(x^{-\frac 18}z^{10}), 
$$
where $\delta(D_0|h)$ equals $1$ if $D_0$ divides $h$, and equals $0$ otherwise, and $g$ is a multiplicative function given by 
$$ 
g(d) = {\varrho (d)} \prod_{p|d} \frac{(2p-1)}{(p+1)}. 
$$
 \end{lemma}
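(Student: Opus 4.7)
The plan is to detect the progression $r \equiv f(\nu) \pmod h$ with Dirichlet characters and extract the main term from the resulting multiplicative Dirichlet series via Perron's formula. Every prime $p \mid d$ is coprime to $H$ and hence to $2aD$, so Hensel's lemma gives $\varrho(p^k) = \varrho(p) = 1 + \chi_D(p)$ for all $k \ge 1$, where $\chi_D(\cdot) = \bigl(\tfrac{D}{\cdot}\bigr)$. Combined with $d$ being squarefree this produces the identity
$$\frac{\varrho([d,r])}{[d,r]} = \frac{\varrho(d)}{d} \cdot \frac{\varrho(r)}{r} \cdot \prod_{p \mid (d,r)} \frac{p}{\varrho(p)}.$$
If some $p \mid d$ has $\chi_D(p) = -1$ then $\varrho([d,r]) = 0$ for all $r$ and $g(d) = 0$, so both sides vanish and the lemma is trivial. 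One may therefore assume $\chi_D(p) = 1$ for every $p \mid d$.

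Orthogonality of characters modulo $h$ then gives $M(d) = \tfrac{\varrho(d)}{d\phi(h)} \sum_{\chi \bmod h} \bar\chi(f(\nu))\bigl(\Sigma_\chi(xz^{10}) - \Sigma_\chi(xz^{-10})\bigr)$, with $\Sigma_\chi(Y) = \sum_{r \le Y,\ (r,H)=1} \chi(r) w(r)$ and $w(r) = (\varrho(r)/r)\prod_{p \mid (d,r)} p/\varrho(p)$ multiplicative. The Dirichlet series $F_\chi(\sigma) = \sum_{(r,H)=1} \chi(r) w(r) r^{1-\sigma}$ has an explicit Euler product, whose local factor equals $(1 + \chi\chi_D(p)/p^\sigma)/(1 - \chi(p)/p^\sigma)$ at $p \nmid dH$ and $(1 + (p-1)\chi(p)/p^\sigma)/(1 - \chi(p)/p^\sigma)$ at $p \mid d$. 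Thus $F_\chi(\sigma) = L(\sigma,\chi)\, L(\sigma,\chi\chi_D)\, H_\chi(\sigma)$ with $H_\chi$ holomorphic and bounded in $\Re \sigma \ge 7/8$. Poles at $\sigma = 1$ occur only for $\chi = \chi_0$ and, when $D_0 \mid h$, for $\chi = \chi_D$. A direct residue computation gives
$$A_{\chi_0} = \frac{\phi(H)}{H} \prod_{p \nmid H}\Bigl(1 + \frac{\chi_D(p)}{p}\Bigr) \prod_{p \mid d}\frac{2p-1}{p+1},$$
and the analogous (lengthier) computation for $A_{\chi_D}$, simplified through the identity $(p-1)(p+1) = p^2 - \chi_D(p)^2$ valid for $p \nmid D_0$, yields the same value $A_{\chi_D} = A_{\chi_0}$. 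Multiplying by $\varrho(d)/d$ turns $\prod_{p\mid d}(2p-1)/(p+1)$ into $g(d)/d$, so the two characters combine to produce the stated main term with the factor $1 + \delta(D_0 \mid h)$ and length $20\log z = \log(xz^{10}) - \log(xz^{-10})$.

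The characters producing no pole at $\sigma = 1$ contribute only to the error term. Applying Perron's formula to $\Sigma_\chi(Y)$ and shifting the contour to $\Re \sigma = 7/8$ gives $O(Y^{7/8 + \varepsilon})$ via convexity bounds on $L(\sigma,\chi)$ and $L(\sigma,\chi\chi_D)$ along that line. With $Y \asymp xz^{10}$ and $\varrho(d)/d \ll 2^{\omega(d)}$, summation over characters yields $O(x^{-1/8}z^{10})$. The main obstacle is precisely this contour-shift analysis: one must ensure the bound is uniform in $\chi$, $d$ and $h$, control the polynomial dependence of convexity bounds on the conductor $h$, deal with a possible Siegel zero of $L(\sigma,\chi_D)$ (handled by truncating trivially if present, or absorbed into the error), and verify that the mild growth of $\varrho(d)/d$ over $d \le z^2 = x^{2\delta}$ does not spoil the power saving. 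The algebraic matching $A_{\chi_0} = A_{\chi_D}$ is the other computation requiring care, but is a routine Euler-product identity once laid out.
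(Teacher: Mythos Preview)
Your approach is essentially the paper's: detect the progression by orthogonality of characters $\bmod\ h$, factor the resulting Dirichlet series as $L(s+1,\chi)L(s+1,\chi\chi_D)$ times a benign Euler product, apply Perron, shift contours, and compute the residues at $s=0$ for $\chi=\chi_0$ and (when $D_0\mid h$) $\chi=\chi_D$. A few points need tightening.

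First, you omit the verification that $\overline{\chi_D}(f(\nu))=1$. In the orthogonality sum the character $\chi=\chi_D$ contributes $\overline{\chi_D}(f(\nu))\cdot A_{\chi_D}$, not $A_{\chi_D}$ alone; without $\bigl(\tfrac{D}{f(\nu)}\bigr)=1$ one does not obtain the clean factor $1+\delta(D_0\mid h)$. The paper handles this via the identity $4af(\nu)=(2a\nu+b)^2-D$, which shows $f(\nu)$ is a square modulo every odd prime dividing $D$.

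Second, your error bookkeeping is inconsistent: shifting to $\Re\sigma=7/8$ contributes a factor $Y^{\sigma-1}=Y^{-1/8}$ from the Perron kernel, not $Y^{7/8}$, so the stated bound $O(Y^{7/8+\varepsilon})$ cannot be what you mean (and certainly does not imply $O(x^{-1/8}z^{10})$). The paper shifts further, to $\Re s=-1/4$, truncates at height $\sqrt{x}$, and uses the convexity bound $|F(s;d,\chi)|\ll \varrho(d)\,d^{-1/4}(1+|s|)^{1/4}$ to get $O(\varrho(d)\,x^{-1/8}z^{10})$ directly.

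Third, the Siegel-zero concern is a red herring: no zero-free region is invoked, only convexity on a fixed vertical line inside the critical strip, and the implied constants may depend on $h$ and $H$ since these are fixed before letting $x\to\infty$.
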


Lemma 7 will be proved in the final section of this paper. Here we continue with the estimation of $S_2^{\prime}$.  Using Lemmas 6 and 7 in 
\eqref{S} we obtain 
$$ 
S_2^{\prime} \le \sum_{\substack{ d| P^{\dagger} \\ d \le z^2}}  \lambda_d T(d) = 
T(1) \sum_{\substack{ d| P^{\dagger} \\ d \le z^2}} \lambda_d \frac{g(d)}{d} + O\Big( x^{\frac {63}{64}} z^{15} \sum_{d\le z^2} |\lambda_d| \Big). 
$$ 
We follow the familiar procedure of Selberg's sieve to minimize the main term above, which is a quadratic form in the $\theta_d$, 
subject to the linear constraint $\theta_1=1$.   As is well known, the optimal $\theta_d$ satisfy $|\theta_d|\le 1$ (see \cite[(7.9)]{FI}) 
so that $\lambda_d \ll d^{\epsilon}$ and the error term above may be bounded as $O(x^{99/100})$ provided $\delta$ is small enough.     As for the main term, note that $g(p) =0$ if $(\frac{D}{p})=-1$ and $g(p) = 4+O(1/p)$ if $(\frac{D}{p}) =1$, so that the problem corresponds to a sieve of dimension $2$.  Carrying out the Selberg sieve in this context  (see Theorem 7.1 and Proposition 7.3 of \cite{FI}) we conclude that 
$$ 
S_2^{\prime} \ll T(1) \prod_{\substack{p \le z\\ p\nmid H}} \Big( 1- \frac{g(p)}{p} \Big). 
$$  
After a small calculation, it follows that 
$$
S_2 \ll S_2^{\prime} \ll \frac{{\mathfrak S}}{\phi(H)} \frac{x}{\phi(h) \log z} \ll \frac{1}{20} \frac{{\mathfrak S}}{\phi(H)} \frac{x}{\log z}, 
$$ 
provided $h$ is large enough.

Theorem 4 is now available: we take $\delta>0$ small and $h$ suitably large in terms of $1/\delta$, so that   the estimates of $S_1$, $S_2$ and $S_3$ hold.   
Then for all sufficiently large $z$ (here large may depend on $f$) one has 
$$
S_1+ S_2 +S_3 \le  \frac 3{20} \frac{{\mathfrak S}}{\phi(H)} \frac{x}{\log z}, 
$$ 
and we  we conclude from  \eqref{4.2} that $S\gg {\mathfrak S} x(\phi(H) \log z)^{-1}$, as desired.


\section{An auxiliary estimate: Proof of Lemma \ref{lemmaU}}

In the definition of $T(d)$, we group terms according to $(r,d)$ which we denote by 
$u$.  Thus 
\begin{equation} 
\label{6.1} 
T(d) = \sum_{u| d} \sum_{\substack{ xz^{-10} \le r \le xz^{10} \\ r\equiv f(\nu) \bmod h \\ (r,d)=u\\ (r,H)=1} } 
\sum_{\substack{ n\in{\Bbb Z} \\ n \equiv \nu \bmod H \\ r(d/u) | f(n) }} \Phi \Big( \frac nx\Big). 
\end{equation} 

We now focus on the inner sum over $n$ above.  Temporarily, we put $f_{\nu}(n) = f(\nu +nH)$ so 
that the inner sum over $n$ in \eqref{6.1} may be written as 
\begin{equation} 
\label{6.2} 
\sum_{\substack{ n \in {\Bbb Z} \\ r(d/u) | f_{\nu}(n)} } \Phi \Big( \frac{\nu +nH}{x}\Big) = 
\sum_{\substack{ 1\le \xi \le r(d/u) \\ f_{\nu}(\xi) \equiv 0 \bmod {rd/u}} } \sum_{\substack{ n \in {\Bbb Z} \\ n\equiv \xi \bmod {rd/u}} } \Phi \Big( \frac{\nu +nH}{x}\Big).
\end{equation} 
Here we parametrize the inner sum by $n=\xi+r(d/u) m$ and apply the Poisson summation formula to the sum over $m$.
The Fourier transform of $\Phi$ is
$$
\widehat\Phi(t) = \int_{-\infty}^\infty \Phi(\alpha) e(-\alpha t)\, \mathrm{d}\alpha = \max (0, 1-|t|),
$$ 
and we  find that
$$
 \sum_{\substack { n\in\mathbb Z \\ n\equiv \xi \bmod {rd/u}}} \Phi\Big(\frac{\nu+nH}{x}\Big)
= \frac{x}{Hr(d/u)} \sum_{m\in\mathbb Z} e\Big(\frac{m(\nu+H\xi)}{Hr(d/u)}\Big) \widehat\Phi\Big(\frac{xm}{Hr(d/u)}\Big).
$$
Inserting this into \eqref{6.2} brings in the sum
$$ 
\varrho^{(\nu)}_m(q) = \sum_{\substack{  \xi=1 \\ f_\nu(\xi)\equiv 0 \bmod q}}^{q} e\Big(\frac{m\xi}{q}\Big),
$$
which has been studied by Hooley \cite{H}, Duke, Friedlander and Iwaniec \cite{DFI} and Toth \cite{T}, and we find that 
\begin{equation} 
\label{6.3}  
T(d) = \frac{x}{H} \sum_{u|d} \frac{u}{d}  \sum_{\substack{xz^{-10} \le r \le xz^{10} \\ r\equiv f(\nu)\bmod h \\ (r,d)=u \\ (r,H)=1} }
\frac{1}{r}  \sum_{m\in\mathbb Z} e\Big(\frac{m\nu}{Hr(d/u)} \Big)  \varrho^{(\nu)}_m(rd/u)  \widehat\Phi\Big(\frac{xm}{Hr(d/u)}\Big).
\end{equation} 

Consider first the term $m=0$ in \eqref{6.3}.   Note that $(r,H)=1$ so that $(r,h)=1$, and since $d | P^{\dagger}$ we also have $(d,h)=1$.  
It follows that $\varrho_0^{\nu}(rd/u) = \varrho(rd/u) = {\varrho}([d,r])$, and so the contribution of the $m=0$ term matches the main term of Lemma \ref{lemmaU}. 

This leaves us with the terms where $m\neq 0$.   Since ${\widehat \Phi}(t) =0$ for $|t| \ge 1$, only terms with $x|m|< Hr(d/u)$ make a non-zero contribution.   For such values of $m$, note that $|m \nu/(Hrd/u)| \le |\nu|/x \le H/x$ so that $e(m\nu/(hrd/u)) = 1+ O(H/x)$.  
 Using the trivial estimate $\varrho^{(\nu)}_m(q)\ll q^\epsilon$ when considering the
contribution arising from the $O(k/x)$, we readily find that the terms with $m\neq 0$ yield
\begin{equation} 
\label{6.4} 
\frac{x}{H}\sum_{u|d} \frac ud \sum_{m\neq 0} \sum_{\substack{xz^{-10} \le r \le xz^{10} \\ r\equiv f(\nu)\bmod h \\ (r,d) =u\\ (r,H)=1}  } \frac{  \varrho^{(\nu)}_m(rd/u)}{r}  \widehat\Phi\Big(\frac{xm}{Hr(d/u)}\Big) + O(z^{15}).  
\end{equation}

To bound the sum over $r$ above, we invoke the work of Toth \cite{T}.   His formula (16) with $L=8$, provides the 
estimate
$$
\sum_{R<r\le 2R}  \varrho^{(\nu)}_m (Ar) e\Big(\frac{jr}{h}\Big) \ll R^{63/64} A^{1/32}.  
$$
By M{\" o}bius inversion we can also impose a coprimality condition on $r$ above, thus obtaining 
$$ 
\sum_{\substack{ R<r\le 2R \\ (r,B)=1}}  \varrho^{(\nu)}_m (Ar) e\Big(\frac{jr}{h}\Big) \ll R^{63/64} (AB)^{1/32}. 
$$ 
Using the orthogonality of additive characters, we may further restrict $r$ to any given progression $\bmod {\ h}$: 
$$ 
\sum_{\substack{ R<r\le 2R \\ (r,B)=1 \\ r \equiv c \bmod h}}  \varrho^{(\nu)}_m (Ar)  \ll R^{63/64} (AB)^{1/32}. 
$$ 
Using this estimate and partial summation it is easy to see that the quantity in \eqref{6.4} is 
$$ 
\ll \frac xH \sum_{u|d} \frac ud \sum_{|m| \le Hz^{12} }  (xz^{-10})^{-1/64} d^{1/32} + z^{15} \ll x^{63/64} z^{15},  
$$ 
which completes our proof.  
 
\section{Quadratic congruences on average:  Proof of Lemma \ref{Lemma7}}

If $(\frac{D}{p}) = -1$ for any prime $p|d$, then ${\varrho}(p) =0$ and so ${\varrho}([d,r]) =0$ for all $r$.  
In this case the lemma holds trivially, and henceforth we assume that $(\frac{D}{p})=1$ for all primes $p|d$.   

Let $d \le z^2$ be a square-free divisor of $P^{\dagger}$ and let $\chi$ be a Dirichlet character $\bmod{\  h}$.  Define 
$$ 
F(s;d,\chi) = \sum_{\substack{ r= 1 \\ (r, H)=1}}^{\infty} \frac{\varrho([d,r]) \chi(r)}{[d,r]r^s}. 
$$
Note that $\varrho(q)$ is a multiplicative function of $q$, and for a prime $p \nmid H$ it is easy to see that $\varrho(p^{\ell}) = 1+ (\frac{D}{p})$ 
for all $\ell \ge 1$.   Therefore the series defining $F(s;d,\chi)$ converges absolutely in the  region Re$(s)> 0$.   Further, a small calculation with 
Euler products establishes that 
\begin{equation} 
\label{7.1} 
F(s;d, \chi) = \frac{{\varrho}(d)}{d} \frac{L(s+1,\chi) L(s+1,\chi(\tfrac{D}{\cdot}))}{L(2s+2,\chi^2 (\tfrac{D}{\cdot})^2)}   F_1(s;\chi)  F_2(s;d,\chi), 
\end{equation} 
where 
\begin{equation} 
\label{7.2} 
F_1(s;\chi) =  \prod_{p| H} \Big(1-\frac{\chi(p)}{p^{s+1}}\Big)^{-1} \Big( 1+ \frac{\chi(p)(\frac{D}{p})}{p^{s+1}}\Big), 
\end{equation} 
and 
\begin{equation} 
\label{7.3} 
F_2(s;d,\chi) =   \prod_{p|d} \Big(1+\frac{\chi(p)}{p^s} - \frac{\chi(p)}{p^{s+1}}\Big) \Big(1+ \frac{\chi(p)(\frac{D}{p})}{p^{s+1}}\Big)^{-1}.
\end{equation} 

These expressions furnish a meromorphic continuation of $F(s;d,\chi)$ to the region Re$(s)> -1/2$, with simple poles at $s=0$ only in the 
cases when $\chi$ is the principal character $\bmod h$, or when $\chi (\frac{D}{\cdot})$ is principal (which can only happen if the 
fundamental discriminant dividing $D$ is also a divisor of $h$).   Further, using the convexity bound for the Dirichlet $L$-functions appearing above, in the region Re$(s) \ge -\frac 14$ (and away from the potential pole at $s=0$) we have 
\begin{equation} 
\label{7.4} 
|F(s;d,\chi)| \ll \frac{{\varrho(d)}}{d^{\frac 14}} (1+|s|)^{\frac 14}. 
\end{equation} 

With these facts in hand, we can proceed with a standard argument in analytic number theory, using a quantitative form of 
Perron's formula and shifting contours.  We begin with Perron's formula  
$$ 
\sum_{\substack{ xz^{-10} \le r\le xz^{10} \\ (r,H) =1} } \frac{\varrho([d,r])}{[d,r]} \chi(r) 
= \frac{1}{2\pi i} \int_{\text{Re}(s) =1/\log x} F(s;d,\chi) \frac{(xz^{10})^s - (xz^{-10})^s}{s} ds. 
$$ 
After truncating the integral at Im$(s) = \sqrt{x}$, and shifting contours to the line Re$(s) =-\frac 14$ and using 
\eqref{7.4}, we obtain that the above equals 
\begin{equation} 
\label{7.5} 
(20 \log z)  {\mathop{\text{Res}}_{s=0}} F(s;d,\chi) + O(\varrho(d) x^{-\frac{1}{8}} z^{10}).  
\end{equation}    

 It remains to calculate the residue of $F(s;d,\chi)$ in cases where a pole occurs (namely, when $\chi$ is principal, or when $\chi (\frac{D}{\cdot})$ is principal).  
 When $\chi$ is the principal character $\bmod h$, a small calculation gives 
 \begin{equation} 
 \label{7.6} 
  {\mathop{\text{Res}}_{s=0}} F(s;d,\chi)  = \frac{\varrho(d)}{d} \prod_{p|d} \Big( \frac{(2p-1)}{(p+1)} \Big) \frac{\phi(H)}{H} \prod_{p\nmid H} \Big( 1+ \frac{(\frac{D}{p})}{p} \Big).
  \end{equation} 
  When $\chi (\frac{D}{\cdot})$ is the principal character (which is only possible if $D_0$, the fundamental discriminant corresponding to $D$, divides $h$)  then 
  a similar calculation shows that the residue of $L(s;d,\chi)$ is exactly the same as the right side above. 
  
 We now assemble the observations made above to complete the proof of the lemma.  Using the orthogonality of characters $\bmod h$ we have 
 $$ 
 \sum_{\substack{ xz^{-10} \le r\le xz^{10} \\ (r,H) =1 \\ r\equiv f(\nu) \bmod h }}  \frac{\varrho([d,r])}{[d,r]} = 
 \frac{1}{\phi(h)} \sum_{\chi \bmod h} \overline{\chi}(f(\nu)) \sum_{\substack{ xz^{-10} \le r\le xz^{10} \\ (r,H) =1} } \frac{\varrho([d,r])}{[d,r]} \chi(r). 
 $$    
 From \eqref{7.5} and \eqref{7.6}  the above equals 
 $$ 
( 20 \log z) \frac{g(d)}{d} \frac{\phi(H)}{H} \prod_{p\nmid H} \Big( 1+ \frac{(\frac Dp)}{p}\Big) \frac{1}{\phi(h)} \Big( 1 + \delta(D_0|h) \Big(\frac{D}{f(\nu)}\Big)\Big) + O(x^{-\frac  18} z^{10}). 
$$ 
Lastly, note that since $f(\nu)$ is coprime to $D$, and $4a f(\nu)  = (2a\nu+b)^2 -D$, one has $(\frac{D}{f(\nu)})=1$.  The lemma follows. 

 \end{document}